\documentclass[11pt]{amsart}
\numberwithin{equation}{section}

\usepackage{amsmath,amssymb,amscd,amsxtra,latexsym,
graphicx,epsfig,euscript,amsthm,mathscinet}
\usepackage{amsmath,amsfonts,amsthm,amsopn,amssymb,enumitem}
\usepackage{graphicx}
\usepackage{float}
\usepackage{a4wide}
\usepackage{color}
\usepackage{esint}
\usepackage{palatino}
\usepackage[backend=bibtex,style=numeric,sorting=nyt,
giveninits=true,isbn=false,url=false,doi=false]{biblatex}
\renewbibmacro{in:}{\ifentrytype{article}{}{\printtext{\bibstring{in}\intitlepunct}}}
\DeclareFieldFormat*{title}{#1}
\usepackage[colorlinks]{hyperref}
\AtBeginDocument{%
  \hypersetup{%
    linkcolor=blue,%
    citecolor=green,%
  }%
}
\usepackage{cleveref}
\crefname{equation}{}{}
\crefname{figure}{}{}

\makeatletter
\@namedef{subjclassname@2020}{\textup{2020} Mathematics Subject Classification}
\makeatother

\newtheorem{theorem}{Theorem}[section]
\newtheorem{lemma}[theorem]{Lemma}

\newtheorem{proposition}[theorem]{Proposition}

\begin{document}

\title[Bifurcation of overdetermined capillary problems]{Bifurcation of overdetermined capillary problems in a strip domain}

\author{Yuanyuan Lian}
\address{(Y.~Lian)
  Departamento de Geometr\'{i}a y Topolog\'{i}a, Departamento de An\'alisis matem\'atico, Universidad de Granada,
  Campus Fuentenueva, 18071 Granada, Spain}
\email{lianyuanyuan.hthk@gmail.com; yuanyuanlian@correo.ugr.es}

\author{Pieralberto Sicbaldi}
\address{(P.~Sicbaldi)
  IMAG, Departamento de An\'alisis matem\'atico, Universidad de Granada,
  Campus Fuentenueva, 18071 Granada, Spain
  \& Aix Marseille Universit\'e - CNRS, Centrale Marseille - I2M, Marseille, France}
\email{pieralberto@ugr.es}
\thanks{{\bf Acknowledgements.}
Y. L. and P. S. have been supported by the Grants PID2020-117868GB-I00 and PID2023-150727NB-I00 of the MICIN/AEI.
P. S. has been supported also by the \emph{IMAG-Maria de Maeztu} Excellence Grant CEX2020-001105-M funded by the MICIN/AEI.}

\keywords{Overdetermined boundary conditions; Capillary problem; Bifurcation theory.}

\subjclass[2020]{35B32; 35N25; 35J25; 35J93; 35Q35.}

\maketitle

\noindent

\noindent
\begin{abstract}
 In this paper, we consider the classical overdetermined capillary problem:
 \begin{equation*}
  \begin{cases}
  \mathrm{div} \left(\frac{\nabla u}{\sqrt{1+|\nabla u|^2}}\right) - bu =0 &~~\mbox{in}~~ \Omega,\\
  \partial_{\nu} u=\kappa &~~\mbox{on}~~\partial\Omega,\\
  u=c &~~\mbox{on}~~\partial\Omega,
  \end{cases}
\end{equation*}
 where $b$, $c$ and $\kappa$ are positive constants, and $\Omega\subset \mathbb{R}^2$. When $\Omega$ is an infinite strip, i.e., a domain bounded by two parallel straight lines, there exists a unique one-dimensional solution (called the trivial solution) to this problem. By means of a bifurcation argument, we establish the existence of a critical period $T_*$ at which a branch of non-trivial solutions bifurcates from the trivial one. These solutions are genuinely two-dimensional and are defined in unbounded periodic domains $\Omega$ that are diffeomorphic to an infinite strip, yet whose boundaries are no longer straight lines. This result offers a significant physical interpretation in the context of capillary phenomena.
\end{abstract}

\section{Introduction}
\label{section 1}

When a solid is inserted into a large liquid reservoir, a capillary phenomenon occurs at the interface between the solid and the liquid. Let $(x, t, z)$ denote the standard coordinates in the Euclidean space $\mathbb{R}^3$. The $x$- and $t$-axes lies in the horizontal plane (corresponding to the liquid level before insertion), while the $z$-axis is oriented in the opposite direction of gravity. Let $\Omega \subset \mathbb{R}^2$ denote the projection of the liquid surface onto the horizontal plane, and let $u$ represent the height of the liquid surface. Along the interface, the liquid surface meets the solid at a constant contact angle $\theta$, also referred to as the \emph{wetting angle}. The equilibrium configuration of the liquid surface is characterized by a height function $u$ whose mean curvature is proportional to the height itself. More precisely, $u$ satisfies the following problem, governed by the Laplace-Young law (see \cite[Chapter 1]{MR816345}, see also \cite{MR2335074}):
\begin{equation}\label{equil}
  \begin{cases}
  \mathrm{div} \left(\frac{\nabla u}{\sqrt{1+|\nabla u|^2}}\right) - b u = 0 & \mbox{in}~~ \Omega,\\
  \partial_{\nu} u = \cos \theta \sqrt{1+ |\nabla u|^2} & \mbox{on}~~ \partial \Omega,
\end{cases}
\end{equation}
where $b = (\rho-\rho_0) g/\sigma>0$ and $\nu$ denotes the unit outward normal on $\partial \Omega$. The parameters $\rho, \rho_0, g, \sigma$ represent the density of the liquid, the density of the surrounding gas, the gravitational acceleration and the surface tension of the liquid, respectively. The contact angle $\theta$ provides a quantitative measure of the wettability of a solid surface by a liquid, with its value dictating the behavior of the liquid-solid interface. It is generally broken down into three distinct regimes: high wetting ($0<\theta < \pi/2$), neutral wetting ($\theta = \pi/2$) and low wetting ($\pi/2 < \theta < \pi$). In the case $\theta = \pi/2$, there is neither capillary rise nor capillary depression. From the view point of mathematics, $u\equiv 0$ is the unique, trivial solution to \eqref{equil} for any domain. Due to the triviality of this neutral case, we assume $0<\theta <\pi/2$ throughout this paper. The case $\pi/2 < \theta < \pi$ can be reduced to the high wetting case under the transformation $u$ to $-u$.

If we further assume that the liquid rises a uniform height along the liquid-solid interface, then $u$ becomes a solution to the following overdetermined capillary problem:
\begin{equation}\label{capil}
  \begin{cases}
  \mathrm{div} \left(\frac{\nabla u}{\sqrt{1+|\nabla u|^2}}\right) - bu =0 &~~\mbox{in}~~ \Omega,\\
  \partial_{\nu} u=\kappa &~~\mbox{on}~~\partial\Omega,\\
  u=c &~~\mbox{on}~~\partial\Omega,
  \end{cases}
\end{equation}
where $c$ is a positive constant and $\kappa=\cot \theta >0$.

For the classical capillary phenomenon involving a liquid rising in a thin, straight cylindrical tube (where $\Omega$ is a bounded domain), the celebrated work of Serrin \cite[Theorem 2]{MR333220} establishes that the existence of a solution of \eqref{capil} implies that the tube's cross-section must be circular and the solution $u$ must be radially symmetric. In the case where a finite number of cylindrical solids, say $m$ solids with cross-section $G_i$ ($i=1,\dots, m$), are dipped into an infinitely large reservoir, the domain $\Omega=\mathbb{R}^2\backslash \cup_{i=1}^m G_i$ becomes an exterior domain. Based on the results by Reichel \cite{MR1463801} and Sirakov \cite{MR1808026}, it is known for such an overdetermined problem to be solvable, $m$ must equal $1$ and the cross-section $G_1$ must be a disk. In our recent work \cite{LianSicbaldi2025}, we investigate the configuration where the liquid in a large reservoir is interrupted by a single, infinitely long cylindrical plate. Here, the cross-section of the liquid domain $\Omega$ is an unbounded region that is diffeomorphic to the half-plane $\mathbb{R}^2_+$. We prove that under these conditions, the plate must be flat and the solution $u$ is necessarily one-dimensional.

\medskip

A natural and intriguing question arises: does a similar rigidity result hold for \emph{a pair of} infinitely long, non-intersecting cylindrical plates immersed in an infinitely large reservoir? In this paper, we demonstrate that this is not the case. Specifically, we show the existence of nontrivial unbounded domains that bifurcate from the trivial configuration of two parallel straight lines. In other words, the uniform capillary rise of a liquid along the entire boundary $\partial \Omega$ is not restricted to the configuration of two flat plates.

\medskip

Our approach relies on bifurcation analysis. The procedure begins by formulating a sufficiently smooth mapping, $F(x,\lambda)$, derived from the overdetermined boundary conditions. Here, $x$ represents the state variable in an appropriate Banach space and $\lambda \in \mathbb{R}$ denotes the bifurcation parameter. We first identify a family of trivial solutions, $F(0,\lambda)=0$, which exists for all $\lambda$. A nontrivial solution branch can only emerge at the points where $F$ loses its invertibility, i.e., where the Fr\'{e}chet derivative with respect to $x$, denoted by $F_{x}(0,\lambda)$, becomes singular. Such a bifurcation point occurs at a critical value $\lambda_*$ where the linearized operator $F_{x}(0,\lambda_*)$ has a zero eigenvalue. To establish the existence of a nontrivial branch, we employ the Crandall-Rabinowitz theorem (or the transversality condition). Specifically, we evaluate the mixed partial derivative  $F_{x\lambda}(0,\lambda_*)$; if the action of $F_{x\lambda}(0,\lambda_*)$ on the null-space eigenfunction lies strictly outside the range of $F_{x}(0,\lambda_*)$, the transversality condition is satisfied, ensuring the existence of a nontrivial bifurcation branch.

Bifurcation theory serves as a powerful tool for identifying the conditions under which symmetry or rigidity is broken. In their seminal work, Gidas, Ni and Nirenberg \cite{MR544879} proved that any positive solution to the equation $\Delta u +f(u)=0$ in a ball with homogeneous Dirichlet boundary conditions must be radially symmetric. In stark contrast to the case of a ball, Smoller and Wasserman \cite{MR853965, MR848648} demonstrated that symmetry breaking occurs on annuli. By employing bifurcation techniques, they showed that the equation exhibits a multiplicity of non-radial solutions alongside the expected radial ones.

A similar dichotomy exists in the study of constant mean curvature (CMC) surfaces. Alexandrov Theorem \cite{MR143162} establishes a rigidity result, stating that any embedded, compact CMC surface in three-dimensional Euclidean space must be a standard sphere. However, utilizing bifurcation theory, Mazzeo and Pacard \cite{MR1941630} proved that as the neck parameter of Delaunay nodoids approaches certain critical values, infinitely many bifurcation points arise, i.e., new families of CMC surfaces emerge.

In \cite{MR1470317}, Berestycki, Caffarelli and Nirenberg proposed the following celebrated conjecture regarding the overdetermined problem:
\begin{equation}\label{bcnn}
  \begin{cases}
\Delta u + f(u) = 0 & \mbox{in}~~ \Omega,\\
u> 0 & \mbox{in}~~ \Omega, \\
u= 0 & \mbox{on}~~ \partial \Omega, \\
\partial_{\nu} u=\kappa &\mbox{on }~~ \partial \Omega,
\end{cases}
\end{equation}
where $\kappa$ is a constant. The conjecture states that if $\mathbb{R}^n \backslash \overline{\Omega}$ is connected, then the existence of a bounded solution to \eqref{bcnn} implies that $\Omega$ must be a a ball, a half-space, a generalized cylinder $B^k \times \mathbb{R}^{n-k}$ (where $B^k$ is a $k$-dimensional ball), or the complement of one of these domains. However, this conjecture was disproved. Specifically, the second author provided the first counterexample by employing a bifurcation argument to establish the existence of non-standard domains admitting such solutions \cite{MR2592974}. Subsequently, numerous counterexamples have been constructed using bifurcation theory. These include domains topologically equivalent to a cylinder \cite{MR4484836, MR2854185, MR4649188}, a half-space \cite{MR3417183}, and the complement of a ball \cite{MR4046014, dai2024}.

\medskip

Inspired by \cite{MR2592974, MR2854185, MR4484836}, we anticipate the existence of nontrivial solutions to the overdetermined capillary problem \eqref{capil} in domains $\Omega$ that are diffeomorphic to an infinite strip. We first identify the known solutions corresponding to the capillary phenomenon between two parallel flat plates as the trivial branch, serving as the starting point for our bifurcation analysis. Following the methodology in \cite{MR2592974, MR2854185, MR4484836}, we impose a periodicity constraint on the boundary $\partial \Omega$ with a given period $T$. This effectively compactifies the infinite direction $\mathbb{R}$ into the circle $S^1$, ensuring that the linearized operator, which would otherwise have a continuous spectrum, possesses a purely discrete spectrum. This recovery of the Fredholm property allows us to analyze the degeneracy of the operator and apply the Crandall-Rabinowitz Theorem to establish the loss of rigidity, thereby confirming the emergence of a nontrivial bifurcation branch.

A key innovation of this paper is that, unlike previous studies centered on the standard Laplacian ($\Delta u$), our analysis tackles a more complex quasilinear operator associated with the mean curvature term
\begin{equation*}
  \mathrm{div} \left(\frac{\nabla u}{\sqrt{1+|\nabla u|^2}}\right).
\end{equation*}
This introduces significant technical challenges in both the linearization procedure and spectral analysis of the associated mapping. The presence of the mean curvature operator necessitates a more delicate treatment of the shape derivative, as the coefficients of the linearized operator now depend on the gradient of the trivial solution. In addition, not only is it technically more complicated, but it also requires us to come up with new method to study it.
To the best of our knowledge, there is a scarcity of literature \cite{MR4271788, LianSicbaldi2025} concerning the analogue of problem \eqref{bcnn} where the Laplacian is replaced by the mean curvature operator.

In addition, unlike traditional approaches that derive Neumann conditions from Dirichlet data, we move from the Neumann problem to the Dirichlet problem. This approach is dictated by the physics of capillarity: for any given liquid-solid interface, the contact angle is a fixed physical constant. Our study explores whether bifurcation occurs under these fixed-angle constraints. Specifically, we examine whether two cylindrical plates in a large reservoir can support multiple equilibrium solutions that satisfy the same height requirements.

Starting from the Neumann problem, we characterize the perturbed surface by its deformation $v$ relative to the equilibrium state. This allows us to define a smooth mapping $G(v, T): C^{2,\alpha}(\bar{\Omega}) \times \mathbb{R}_+ \to C^{2,\alpha}(\bar{\Omega})$. The linearized operator of $G$ at $v=0$, denoted by $H_T$, is composed of a term proportional to the identity plus a compact operator. Thus, $H_T$ is a Fredholm operator of index $0$, facilitating the application of bifurcation theory.

\medskip

Our main result is formulated as follows:
\begin{theorem} \label{Th_main_B_Cap}
For any $\kappa>0$, there exist a positive number $T_*$ and a continuous curve
\begin{equation*}
  \begin{array}{ccc}
  (-\varepsilon, \varepsilon) & \to & C^{2,\alpha}(\mathbb{R} / \mathbb{Z}) \times \mathbb{R}_+ \times \mathbb{R}_+\\
  \eta & \mapsto & (v_{\eta},T_{\eta},c_{\eta})
  \end{array}
\end{equation*}
for some $\varepsilon$ small, with
\begin{equation*}
  v_{\eta}(t)=v_{\eta}(-t),\quad \int_0^1 v_{\eta}(t)\,dt=0.
\end{equation*}
and $v_{\eta}=0$ if and only if $\eta=0$. Moreover, $T_0=T_*$ and
the following overdetermined capillary problem:
\begin{equation}\label{capil_result}
  \begin{cases}
  \mathrm{div} \left(\frac{\nabla u}{\sqrt{1+|\nabla u|^2}}\right) - bu =0 &~~\mbox{in}~~ \Omega_{\eta},\\
  \partial_{\nu} u=\kappa &~~\mbox{on}~~\partial\Omega_{\eta},\\
  u=c_{\eta} &~~\mbox{on}~~\partial\Omega_{\eta},
  \end{cases}
\end{equation}
has a solution in the domain
\begin{equation*}
  \Omega_{\eta} \,=\,
\left\{ (x,t) \in \mathbb{R} \times \mathbb{R}  \, :  \ |x| < 1 + v_{\eta} \left(\frac{t}{T_{\eta}}\right) \right\}.
\end{equation*}
The solution $u= u_{\eta}$ of problem \eqref{capil_result} is $T_{\eta}$-periodic in the variable $t$ and hence bounded.

In the context of the capillary phenomenon, if we insert two cylindrical plates, whose generating curves are given by
\begin{equation*}
x= 1 + v_{\eta} \left(\frac{t}{T_{\eta}}\right),~~x= -1 + v_{\eta} \left(\frac{t}{T_{\eta}}\right)
\end{equation*}
respectively, into an infinitely large liquid reservoir, the liquid between the plates rises to a uniform height along their inner walls.
\end{theorem}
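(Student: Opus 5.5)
We prove the theorem by a Crandall--Rabinowitz argument applied to a map built from the Neumann problem, as announced in the introduction.

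\emph{Trivial branch and normalization.} For $\Omega_0=\{|x|<1\}$ we take the even one-dimensional solution $u_0(x)$ of $\bigl(u'/\sqrt{1+u'^2}\bigr)'=bu$ on $(-1,1)$ with $u_0'(\pm1)=\pm\kappa$. Rewriting the ODE as $(\sin\psi)'=bu$ with $\tan\psi=u'$, a first integral and a shooting argument give existence and uniqueness. The value $c_0=u_0(\pm1)$ is then determined, and this is why $c_\eta$ must appear as a free parameter in the statement. We rescale $t$ by $T$, so that all functions are $1$-periodic in $t/T$. For a small even function $v\in C^{2,\alpha}(\mathbb{R}/\mathbb{Z})$ with zero mean, and for $T$ near the candidate value, we solve the Neumann problem
\[
\mathrm{div}\Bigl(\frac{\nabla u}{\sqrt{1+|\nabla u|^2}}\Bigr)-bu=0 \ \text{in } \Omega_{v,T}, \qquad \partial_\nu u=\kappa \ \text{on } \partial\Omega_{v,T}.
\]
Since the zeroth-order coefficient $-b$ is negative, the linearization at $u_0$ is invertible on periodic even functions. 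The implicit function theorem therefore gives a unique solution $u_{v,T}$ depending smoothly on $(v,T)$, after pulling back to the fixed strip by a diffeomorphism $(x,t)\mapsto(x(1+\chi(x)v(t/T)),t)$.

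\emph{The bifurcation map.} We set
\[
G(v,T)=u_{v,T}\big|_{\partial\Omega}-\text{mean value},
\]
viewed as an even, zero-mean function of $t\in\mathbb{R}/\mathbb{Z}$, the same on both sides by the $x\mapsto -x$ symmetry. Then $G(0,T)=0$ for all $T$, and a zero of $G$ gives a solution of \eqref{capil_result} with $c_\eta$ equal to that mean value. To compute the linearization $H_T=D_vG(0,T)$ we expand $v=\sum_{k\ge1}a_k\cos(2\pi kt)$. The shape derivative $w$ solves the linearized equation
\[
\partial_x\bigl((1+u_0'^2)^{-3/2}w_x\bigr)+(1+u_0'^2)^{-1/2}w_{tt}-bw=0,
\]
with Robin/Neumann boundary data coming from differentiating $\partial_\nu u=\kappa$. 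This data involves $v\,u_0''(1)$ and the curvature term $v''$ weighted by $\kappa$ and $\sqrt{1+\kappa^2}$. Then
\[
H_T v=w|_{x=1}+u_0'(1)\,v=w|_{x=1}+\kappa v.
\]
Mode by mode this gives a multiplier
\[
\sigma(k/T)=\kappa+\phi_{k/T}(1)\,\beta(k/T),
\]
where $\phi_\mu$ is the even solution of the ODE
\[
\bigl((1+u_0'^2)^{-3/2}\phi'\bigr)'-\bigl(4\pi^2\mu^2(1+u_0'^2)^{-1/2}+b\bigr)\phi=0
\]
normalized by its Neumann data. Because the operator is second order in the Dirichlet-to-Neumann sense, $H_T$ is a multiple of the identity plus a compact operator, hence Fredholm of index $0$.

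\emph{Spectral analysis (the main obstacle).} We must show that $\mu\mapsto\sigma(\mu)$ has a simple zero at some $\mu_*>0$. The plan is to prove that $\sigma(\mu)$ has one sign as $\mu\to0$, where the comparison uses the $1$D ODE and the sign of $u_0''(1)=b c_0(1+\kappa^2)^{3/2}>0$. As $\mu\to\infty$, the curvature term $\sim\mu^2$ dominates and forces the opposite sign. We also need $\sigma$ strictly monotone in $\mu$. For monotonicity, the first thing to try is differentiating the Riccati equation satisfied by $\phi_\mu'/\phi_\mu$ in $\mu$, together with a Sturm comparison argument. This step is delicate because the coefficients depend on $u_0'$, so the explicit $\cosh$ formulas of the Laplacian case are unavailable.

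\emph{Conclusion.} Set $T_*=1/\mu_*$, so the kernel of $H_{T_*}$ is spanned by $\cos(2\pi t)$. Monotonicity gives $\sigma(k/T_*)\ne0$ for $k\ge2$, so the kernel is one-dimensional and the range is its $L^2$-orthogonal complement. Transversality holds because
\[
\partial_T H_T\cos(2\pi t)\big|_{T_*}=-\frac{\mu_*}{T_*}\,\sigma'(\mu_*)\cos(2\pi t)\neq0,
\]
which lies outside the range. The Crandall--Rabinowitz theorem then yields the curve $(v_\eta,T_\eta)$ with $v_\eta=\eta\cos(2\pi t)+o(\eta)$, and $c_\eta$ is the corresponding mean boundary value. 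Finally, translating both boundary curves by the same $v_\eta$, as in the physical reformulation, gives the stated plate picture.
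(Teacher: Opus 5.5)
There is a genuine gap at exactly the step you call the main obstacle, and part of the setup that feeds into it is wrong. The Neumann data of the Eulerian shape derivative contains no $v''$ term. Differentiating $u_x(1+\eta v,t)-\eta v'\,u_t(1+\eta v,t)=\kappa\sqrt{1+\eta^2v'^2}$ at $\eta=0$ and using $\partial_t u_0=0$ gives just $w_x(1,t)=-u_0''(1)\,v(t)$. This is the paper's condition $\hat\psi_\nu+\hat c\,w=0$, with $\hat c=bc(1+\kappa^2)^{3/2}=u_0''(1)$. So $\beta\equiv -u_0''(1)$ is a constant, and with $\phi_\mu'(1)=1$ you get $\sigma(\mu)=\kappa-u_0''(1)\,\phi_\mu(1)$. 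Had there really been a $v''$ term, $w|_{x=1}$ would be a first-order operator of $v$, and $H_T$ would not be ``$\kappa I$ plus compact'' on $C^{2,\alpha}$; your own Fredholm claim would break. In particular, your large-$\mu$ mechanism does not exist: there is no $\mu^2$ curvature contribution. Instead $\phi_\mu(1)\to0$, so $\sigma(\mu)\to\kappa>0$.

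More importantly, you never determine the sign at $\mu=0$, and that sign is the whole content of the existence of $T_*$. You need $\sigma(0)<0$, i.e.\ $\kappa<u_0''(1)\phi_0(1)$, where $\phi_0$ is the even solution of $(A^{11}\phi')'=b\phi$, $\phi_0'(1)=1$, $A^{11}=(1+u_0'^2)^{-3/2}$. Positivity of $u_0''(1)$ alone says nothing about this quantitative inequality. The missing idea is that $u_0'$ solves the \emph{same} linear ODE $(A^{11}f')'=bf$, obtained by differentiating the profile equation. The paper uses this in its proof that $\tau_1<0$: it inserts the Lipschitz function $\mathrm{sgn}(y)\Phi'(y)$ into the one-dimensional Robin quadratic form, which gives the value $0$, and strictness follows because this function is not smooth at $0$. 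In your language, you can equivalently evaluate the constant Wronskian $A^{11}(\phi_0u_0''-\phi_0'u_0')\equiv\phi_0(0)u_0''(0)>0$ at $x=1$.

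The monotonicity you leave open is, by contrast, easy. For $\mu_1<\mu_2$, the difference $\phi_{\mu_1}-\phi_{\mu_2}$ solves a homogeneous Neumann problem with negative right-hand side, so the maximum principle and Hopf lemma give $\phi_{\mu_1}>\phi_{\mu_2}$. The same argument applied to $\partial_\mu\phi_\mu$ gives $\sigma'(\mu_*)>0$; this is essentially the paper's transversality argument. The paper itself bypasses $\sigma$: it chooses $T_*$ where the principal Robin eigenvalue $\lambda_1^T$ crosses $0$, and excludes the modes $k\ge2$ via $Q_k^T-Q_1^T=2\pi^2(k^2-1)\int A^{22}\psi^2>0$.
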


%\begin{remark}\label{re.1}
%Unlike the Laplacian case where the linearized operator has constant coefficients, the linearization of the mean curvature operator involves weights that depend on the gradient of the trivial solution.
%\end{remark}

%\medskip

The remainder of this paper is organized as follows. In \Cref{sec:baseline solution}, we recall the existence of the solution to the capillary problem in an arbitrary domain and present the trivial solution for the capillary problem between two parallel flat plates. \Cref{sec:operator_G} is devoted to the definition and analysis of a nonlinear operator $G$ corresponding to the overdetermined problem \eqref{capil}. In \Cref{sec:linearized_operator} and \Cref{sec:study of H_T}, we compute the Fr\'{e}chet derivative of the operator $G$ at an arbitrary constant function and investigate its spectral properties. Finally, in \Cref{sec:proof_Tmain}, we provide the proof of \Cref{Th_main_B_Cap} by applying the Crandall-Rabinowitz Bifurcation Theorem.

\section{Existence result and the trivial solution}
\label{sec:baseline solution}

In this section, we first review the existence of solutions for the capillary problem over arbitrary domains. We then present the solution of the capillary problem between two parallel straight lines, which will serve as the trivial solution throughout the paper, see Figure 1(a) in \cite{MR2335074}.

Let $B=(-1,1)$. For $T>0$, we set
\begin{equation*}
  C^T_1 = B \times \mathbb{R}/T\mathbb{Z}.
\end{equation*}
An infinite strip domain in $\mathbb{R}^2$ with width $B$ can be regarded as a periodic repetition of $C^T_1$.
Given a $C^{2,\alpha}$ function $v:\mathbb{R}/ \mathbb{Z}\rightarrow \mathbb{R}$ (i.e., periodic of period $1$) with a small $C^{2,\alpha}$-norm, we define
\begin{equation*}
  C^T_{1+v}=\left\{(x,t)\in\mathbb{R}\times\mathbb{R}/T \mathbb{Z}: |x|<1+ v\left(\frac{t}{T} \right)\right\}.
\end{equation*}
Such a domain is a small perturbation of an infinite stripe with width $B$, periodic in $t$ with period $T$.

Set $u_{\nu}=\partial_{\nu} u$. Consider the following capillary problem:
\begin{equation}\label{capil_1}
  \begin{cases}
  \mathrm{div} \left(\frac{\nabla u}{\sqrt{1+|\nabla u|^2}}\right) - bu =0 &~~\mbox{in}~~ C^T_{1+v},\\
  u_{\nu}=\cos \theta \sqrt{1+ |\nabla u|^2} &~~\mbox{on}~~\partial C^T_{1+v},
  \end{cases}
\end{equation}
where $b>0$ and $0<\theta<\pi/2$.
It is well-known that for any $v$ (with a small norm, similarly hereinafter), there exists a unique solution $u$ (see \cite{Gerhardt1976global, MR2160744, MR638359, MR398278, MR816345}). When $v \equiv 0$, problem \eqref{capil_1} reduces to the classical capillary problem between two parallel straight lines, namely:
\begin{equation}\label{capi_model}
\begin{cases}
\mathrm{div} \left(\frac{\nabla u}{\sqrt{1+|\nabla u|^2}}\right) - bu=0 & \text{in } C_1^T,\\
u_{\nu}=\cos \theta \sqrt{1+ |\nabla u|^2} & \text{on } \partial C_1^T.
\end{cases}
\end{equation}
The height of the liquid in this case is constant on the boundary, i.e., $u=c>0$ on $\partial C_1^T$. Then $u_{\nu}=|\nabla u|=\cot \theta=\kappa$ on $\partial C_1^T$. Let $\Phi$ denote the solution to \eqref{capi_model}. That is, $\Phi$ is the solution of \eqref{capil} with $\Omega$ replaced by $C_1^T$. In fact, this is the start point of our bifurcation argument. By symmetry, $\Phi$ depends only on the spatial variable $x$ and is even (i.e., $\Phi(x) = \Phi(-x)$). In fact, the profile $\Phi$ can be determined by solving the following ODE:
\begin{equation}\label{capi_ODE}
\begin{cases}
\left(\frac{\Phi'}{\sqrt{1+\Phi'^2}}\right)' - b\Phi=0 & \text{in } B,\\
\Phi'(1)=\kappa, \quad
\Phi'(-1)=-\kappa.
\end{cases}
\end{equation}
We call $\Phi$ the trivial solution. For the explicit expression of $\Phi$, see Section 2.1 of \cite{MR2335074}.

\section{The construction of the bifurcation operator}
\label{sec:operator_G}

In this section, we will construct a nonlinear operator $G$ associated with the following Neumann problem
\begin{equation}\label{capil_Neumann}
\begin{cases}
\mathrm{div} \left(\frac{\nabla u}{\sqrt{1+|\nabla u|^2}}\right) - bu=0 & \text{in } C_{1+v}^T,\\
u_{\nu}=\kappa & \text{on } \partial C_{1+v}^T,
\end{cases}
\end{equation}
for any $T>0$ and $v\in C^{2,\alpha}(\mathbb{R}/ \mathbb{Z})$. The primary motivation is to transform the geometric PDE problem into an abstract functional equation of the form $G(v, T) = 0$ between appropriate Hölder spaces. In this framework, the zeros of $G$ exactly correspond to the solutions of our original problem. Establishing this operator map and analyzing its regularity properties are crucial prerequisites for rigorously applying the Crandall-Rabinowitz local bifurcation theorem in the subsequent analysis.

Our aim is to find a bifurcating curve $(v_{\eta},T_{\eta},c_{\eta})$ with $v_{\eta} \not\equiv 0$ for $\eta \neq 0$ such that problem \eqref{capil_Neumann} admits a solution with a constant boundary value. Because the physical domain $C^T_{1+v}$ varies with the unknown perturbation $v$ and the period $T$, it is highly inconvenient to apply standard functional analytic tools. Hence, we pull the problem back to a completely fixed reference cylinder $C_1^1$. In the following, we denote the coordinates in original problem as $(x,t)$, and the coordinates after transformation as $(y,s)$. We use subscripts to denote the partial derivatives, for example, $u_x:=\partial u/\partial x$. Furthermore, we adopt the standard Einstein summation convention, where summation is implied over repeated indices and the explicit sum symbol is omitted.

%Partial derivatives are represented by subscripts; specifically, $u_x$ and $u_t$ denote $\partial u/\partial x$ and $\partial u/\partial t$, while $\phi_y$ and $\phi_s$ denote $\partial \phi/\partial y$ and $\partial \phi/\partial s$, respectively.

Let us introduce the following coordinate transformation:
\begin{equation*}
  s=\frac{t}{T},\quad y=\frac{x}{1+v(s)}, \quad \phi(y,s)=u(x,t).
\end{equation*}
Thus, we have
\begin{equation*}
  \frac{\partial y}{\partial t}=-\frac{x v' T^{-1}}{(1+v)^2}=-\frac{y v' T^{-1}}{1+v}.
\end{equation*}
Then, using the chain rule, the derivatives transform as:
\begin{equation*}
  u_x = \frac{1}{1+v} \cdot \phi_y, \quad
  u_t = \frac{1}{T}\cdot \phi_s - \frac{y v' T^{-1}}{1+v} \cdot \phi_y.
\end{equation*}
Consequently, we have
\begin{equation*}
  |\nabla u|^2 = u_x^2 + u_t^2
        = \frac{1+y^2 v'^2 T^{-2}}{(1+v)^2} \cdot \phi_y^2 +\frac{1}{T^2} \cdot \phi_s^2 - \frac{2y v' T^{-2}}{1+v} \cdot \phi_y \phi_s.
\end{equation*}
Set
\begin{equation}\label{W}
  W:=\sqrt{1+|\nabla u|^2}
  =\left(1+\frac{1+y^2 v'^2 T^{-2}}{(1+v)^2} \cdot \phi_y^2 +\frac{1}{T^2} \cdot \phi_s^2 - \frac{2y v' T^{-2}}{1+v} \cdot \phi_y \phi_s\right)^{\frac{1}{2}}.
\end{equation}
A direct computation shows that
\begin{equation*}
  \left(\frac{u_x}{\sqrt{1+|\nabla u|^2}}\right)_x
    =\left(\frac{\frac{1}{1+v} \cdot \phi_y}{W}\right)_y\cdot \frac{1}{1+v}=\frac{1}{(1+v)^2}\left(\frac{\phi_y}{W}\right)_y
\end{equation*}
and
\begin{equation*}
  \begin{aligned}
    \left(\frac{u_t}{\sqrt{1+|\nabla u|^2}}\right)_t
    =&\left(\frac{\frac{1}{T}\cdot \phi_s - \frac{y v' T^{-1}}{1+v} \cdot \phi_y}{W}\right)_y\cdot \frac{-y v' T^{-1}}{1+v}
    +\left(\frac{\frac{1}{T}\cdot \phi_s - \frac{y v' T^{-1}}{1+v} \cdot \phi_y}{W}\right)_s \cdot T^{-1}\\
    =&-\frac{y v' T^{-2}}{1+v}\left(\frac{\phi_s}{W}\right)_y
    +\frac{y v'^2 T^{-2}}{(1+v)^2}\frac{\phi_y}{W}
    +\frac{y^2 v'^2 T^{-2}}{(1+v)^2}\left(\frac{\phi_y}{W}\right)_y\\
    &+\frac{1}{T^2}\left(\frac{\phi_s}{W}\right)_s
    +\frac{y v'^2 T^{-2}}{(1+v)^2}\frac{\phi_y}{W}
    -\frac{y v'' T^{-2}}{1+v}\frac{\phi_y}{W}
    +\frac{y v'^2 T^{-2}}{(1+v)^2}\frac{\phi_y}{W}
    -\frac{y v'^2 T^{-2}}{1+v}\left(\frac{\phi_y}{W}\right)_s\\
    =&-\frac{y v' T^{-2}}{1+v}\left(\frac{\phi_s}{W}\right)_y
    +\frac{3 y v'^2 T^{-2}}{(1+v)^2}\frac{\phi_y}{W}
    +\frac{y^2 v'^2 T^{-2}}{(1+v)^2}\left(\frac{\phi_y}{W}\right)_y\\
    &+\frac{1}{T^2}\left(\frac{\phi_s}{W}\right)_s
    -\frac{y v'' T^{-2}}{(1+v)^2}\frac{\phi_y}{W}
    -\frac{y v'^2 T^{-2}}{(1+v)^2}\left(\frac{\phi_y}{W}\right)_s.
  \end{aligned}
\end{equation*}

Then, the mean curvature operator transforms into:
\begin{equation}\label{Lv}
\begin{aligned}
\mathrm{div} \left(\frac{\nabla u}{\sqrt{1+|\nabla u|^2}}\right)
= & \left(\frac{u_x}{\sqrt{1+|\nabla u|^2}}\right)_x+\left(\frac{u_t}{\sqrt{1+|\nabla u|^2}}\right)_t \\
= & \frac{1+y^2 v'^2 T^{-2}}{(1+v)^2} \cdot \left( \frac{\phi_y}{W} \right)_y
   +\frac{1}{T^2} \left( \frac{\phi_s}{W} \right)_s \\
& - \frac{y v' T^{-2}}{1+v} \cdot \left( \frac{\phi_s}{W} \right)_y - \frac{y v' T^{-2}}{1+v} \cdot \left( \frac{\phi_y}{W} \right)_s \\
& - \frac{y v'' T^{-2}}{1+v} \cdot \frac{\phi_y}{W} + \frac{3 y v'^2 T^{-2}}{(1+v)^2} \cdot \frac{\phi_y}{W}\\
=:& L_{v,T} (\phi).
\end{aligned}
\end{equation}
Note that $|x|=1+v$ on $\partial C^T_{1+v}$. Thus, we have
\begin{equation*}
\begin{aligned}
  \nu= \frac{(1,-v'T^{-1})}{\sqrt{1+v'^2 T^{-2}}} \quad ~~\mbox{ on}~~\{x=1+v\}
  ~~\mbox{  and  }~~
  \nu= \frac{(-1,-v'T^{-1})}{\sqrt{1+v'^2 T^{-2}}} \quad ~~\mbox{ on}~~\{x=-(1+v)\}.
\end{aligned}
\end{equation*}
Then we obtain
\begin{equation*}
  \kappa=u_{\nu}=u_i \nu_i=
  \begin{cases}
    \frac{\sqrt{1+v'^2 T^{-2}}}{1+v}\cdot \phi_y-\frac{v' T^{-2}}{\sqrt{1+v'^2 T^{-2}}} \cdot \phi_s
  &~~\mbox{on}~~\{x=1+v\},\\
  -\frac{\sqrt{1+v'^2 T^{-2}}}{1+v}\cdot \phi_y-\frac{v' T^{-2}}{\sqrt{1+v'^2 T^{-2}}} \cdot \phi_s &~~\mbox{on}~~\{x=-(1+v)\}.
  \end{cases}
\end{equation*}
That is, in the new coordinate system,
\begin{equation*}
  \frac{y \sqrt{1+v'^2 T^{-2}}}{1+v}\cdot \phi_y-\frac{v' T^{-2}}{\sqrt{1+v'^2 T^{-2}}} \cdot \phi_s
  =\kappa ~~\quad~~\mbox{on}~~\partial C^1_{1}.
\end{equation*}
Then problem \eqref{capil_Neumann} becomes:
\begin{equation}\label{capil_4}
\begin{cases}
  L_{v,T} (\phi)- b\phi =0 &~~\mbox{in}~~ C^1_{1},\\
  \frac{y \sqrt{1+v'^2 T^{-2}}}{1+v}\cdot \phi_y-\frac{v' T^{-2}}{\sqrt{1+v'^2 T^{-2}}} \cdot \phi_s
  =\kappa &~~\mbox{on}~~\partial C^1_{1}.
\end{cases}
\end{equation}

Through this transformation, the geometric variations of the domain induced by $v$ have been entirely transferred to the coefficients of the differential operator and the boundary conditions. We now define the appropriate function spaces on this fixed reference domain. For $k \in \mathbb{N}$, let:
\begin{equation}\label{v_1_2}
\begin{aligned}
  &C_{e}^{k,\alpha}(\mathbb{R}/\mathbb{Z})=\left\{v\in C^{k,\alpha}(\mathbb{R}/\mathbb{Z}): v(-s)=v(s)\right\},\\
  &C_{e,m}^{k,\alpha}(\mathbb{R}/\mathbb{Z})=\left\{v\in C_{e}^{k,\alpha}(\mathbb{R}/\mathbb{Z}): \int_0^1 v(s) \,ds=0\right\}.
\end{aligned}
\end{equation}
In addition, we define the H\"{o}lder spaces of even functions on $C_1^1$:
\begin{equation}\label{phi_e_m}
\begin{aligned}
  &C_{e}^{k,\alpha}(C_1^1)=\left\{\phi\in C^{k,\alpha}(C_1^1): \phi(y,s)=\phi(-y,s),~\phi(y,s)=\phi(y,-s)\right\},\\
  &C_{e,m}^{k,\alpha}(C_1^1)=\left\{\phi\in C_{e}^{k,\alpha}(C_1^1): \int_0^1 \phi(1,s) \,ds=0 \right\},
\end{aligned}
\end{equation}
and the Sobolev space:
\begin{equation}\label{int_H_e_m}
\begin{aligned}
  &H_{e}^{1}(C_1^1)=\left\{\phi\in H^{1}(C_1^1):\phi(y,s)=\phi(-y,s),~\phi(y,s)=\phi(y,-s)\right\},\\
  &H_{e,m}^{1}(C_1^1)=\left\{\phi\in H_{e}^{1}(C_1^1): \int_0^1 \phi(1,s) \,ds=0 \right\},\\
  &H_{e}^{1}(B)=\left\{\phi\in H^{1}(B):\phi(y)=\phi(-y)\right\}.
\end{aligned}
\end{equation}

\medskip

Now we show that the solution $\phi$ in \eqref{capil_4} depends on $v$ and $T$ in a smooth way.
\begin{proposition} \label{Pr30}
For any $T>0$ and $v\in C_{e}^{2,\alpha}(\mathbb{R}/\mathbb{Z})$ whose norm is sufficiently small, the problem \eqref{capil_4}	has a unique positive solution $\phi=\phi_{v,T}\in C_e^{2,\alpha}(C^{1}_{1})$. Moreover, $\phi$ depends smoothly on $v$ and $T$, and $\phi_{0,T} \equiv \Phi$ for any $T>0$, where $\Phi$ is the solution of \eqref{capi_ODE}.
\end{proposition}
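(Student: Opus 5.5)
Existence and uniqueness come from the classical theory of the capillary problem (Gerhardt, Emmer, Finn, and the other works cited in \Cref{sec:baseline solution}); smooth dependence on $(v,T)$ comes from the implicit function theorem applied at $v=0$.

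\emph{Existence, uniqueness and symmetry.} Problem \eqref{capil_4} is the pull-back of the Neumann problem \eqref{capil_Neumann} on $C^T_{1+v}$. The condition $u_\nu=\kappa$ is not literally the physical condition $u_\nu=\cos\theta\sqrt{1+|\nabla u|^2}$, but we treat it as a prescribed contact-type condition. The operator $u\mapsto \mathrm{div}(\nabla u/W)-bu$ with $b>0$ satisfies a comparison principle: if $u_1,u_2$ both solve the problem, subtract the equations, multiply by $(u_1-u_2)$, integrate over a period cell and use the monotonicity of $p\mapsto p/\sqrt{1+|p|^2}$. Since $\partial_\nu u_1=\partial_\nu u_2$, the boundary term vanishes, and $b>0$ forces $u_1=u_2$. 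Uniqueness gives the symmetries: $(y,s)\mapsto(-y,s)$ and $(y,s)\mapsto(y,-s)$ preserve \eqref{capil_4} when $v$ is even, so $\phi\in C^{2,\alpha}_e$. Uniqueness also gives $\phi_{0,T}=\Phi$, because $\Phi$ solves \eqref{capil_4} with $v=0$ for every $T$.

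\emph{Smooth dependence and existence for small $v$.} The point of this step is to avoid relying on global theory for a nonstandard boundary condition, so existence for small $v$ comes out of the implicit function theorem as well. Define
\[
\mathcal N(\phi,v,T)=\Bigl(L_{v,T}(\phi)-b\phi,\ \mathcal B_{v,T}(\phi)-\kappa\Bigr),
\]
where $\mathcal B_{v,T}$ is the boundary operator in \eqref{capil_4}. It maps $C^{2,\alpha}_e(C^1_1)\times C^{2,\alpha}_e(\mathbb R/\mathbb Z)\times\mathbb R_+$ into $C^{0,\alpha}_e(C^1_1)\times C^{1,\alpha}_e(\partial C^1_1)$. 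The map $\mathcal N$ is smooth (in fact analytic) for $|v|$ small and $T>0$: its coefficients are rational functions of $v,v',v'',T^{-1},y$, composed with the smooth function $W$. Moreover $\mathcal N(\Phi,0,T)=0$. The derivative in $\phi$ at $(\Phi,0,T)$ is the linearized operator
\[
\psi\mapsto \Bigl(\bigl(\tfrac{\psi_y}{(1+\Phi'^2)^{3/2}}\bigr)_y+\tfrac{1}{T^2}\bigl(\tfrac{\psi_s}{\sqrt{1+\Phi'^2}}\bigr)_s-b\psi,\ \ y\,\psi_y\Bigr).
\]
This is a uniformly elliptic operator in divergence form with a conormal (oblique) boundary condition and zeroth-order term $-b<0$. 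By the energy argument above it is injective. Schauder theory for oblique derivative problems on the compact manifold-with-boundary $C^1_1$ shows it is Fredholm of index $0$, hence an isomorphism $C^{2,\alpha}_e\to C^{0,\alpha}_e\times C^{1,\alpha}_e$. The even symmetry is preserved because the coefficients are even in $y$ and independent of $s$.

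\emph{Conclusion and main obstacle.} The implicit function theorem then yields, for each $T_0$, a neighbourhood of $(0,T_0)$ and a smooth map $(v,T)\mapsto\phi_{v,T}$ solving \eqref{capil_4}. The uniqueness established above, or equivalently local uniqueness from the implicit function theorem, shows that these local solutions patch together consistently in $T$. Positivity follows since $\Phi>0$ on $\bar B$ (by the maximum principle, $b\Phi=(\Phi'/\sqrt{1+\Phi'^2})'$ and $\Phi$ attains its minimum at $0$ with positive value), so a $C^0$-small perturbation stays positive. The main obstacle is the isomorphism property of the linearization, specifically Schauder estimates up to the boundary with the $s$-periodic structure. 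I would handle this by viewing $C^1_1$ as a compact domain with boundary $\{\pm1\}\times S^1$ and invoking the standard oblique-derivative Schauder theory (Gilbarg–Trudinger, Thm. 6.30–6.31). Injectivity is obtained by testing with $\psi$, and the Fredholm alternative then gives surjectivity.
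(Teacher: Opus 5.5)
Your implicit-function-theorem step is essentially the paper's proof and is correct: you expand around $\Phi$, linearize to $\psi\mapsto(\mathrm{div}(A\nabla\psi)-b\psi,\ \psi_\nu)$, and invert using $b>0$. Your $\mathcal N$ even subtracts $\kappa$ in the boundary component, which the paper's definition forgets. You depart from the paper on global uniqueness. The paper simply cites the capillarity literature, and you rightly observe that those results are formulated for the contact-angle condition $\partial_\nu u=\cos\theta\sqrt{1+|\nabla u|^2}$, not for $\partial_\nu u=\kappa$. But the energy argument you substitute does not work. Set $Tu=\nabla u/\sqrt{1+|\nabla u|^2}$ and $W_i=\sqrt{1+|\nabla u_i|^2}$. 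Testing $\mathrm{div}(Tu_1-Tu_2)-b(u_1-u_2)=0$ with $u_1-u_2$ produces the boundary term
\[
\int_{\partial C^T_{1+v}}\nu\cdot(Tu_1-Tu_2)\,(u_1-u_2)=\kappa\int_{\partial C^T_{1+v}}\Bigl(\frac{1}{W_1}-\frac{1}{W_2}\Bigr)(u_1-u_2),
\]
because the conormal quantity for the mean curvature operator is $\partial_\nu u/W$, not $\partial_\nu u$. On the boundary, $W_i^2=1+\kappa^2+|\nabla_\tau u_i|^2$, where $\nabla_\tau$ is the tangential gradient. Nothing forces the tangential gradients to agree, so this term neither vanishes nor has a sign. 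It vanishes exactly under the contact-angle condition you set aside. For the linearized operator the energy argument is fine: $A$ is diagonal and $\nu=\pm e_y$, so $\nu\cdot A\nabla\psi=A^{11}\psi_\nu$, and your injectivity claim stands.

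The conclusion is still true, but it needs the maximum principle rather than energy. Write $\mathrm{div}(Tu)=a^{ij}(\nabla u)u_{ij}$ and apply the mean value theorem to the difference. Then $w=u_1-u_2$ satisfies a linear uniformly elliptic problem
\[
a^{ij}(\nabla u_1)\,w_{ij}+b^k w_k-b\,w=0,\qquad \partial_\nu w=0 \text{ on } \partial C^T_{1+v},
\]
with bounded $b^k$ for $C^{2,\alpha}$ solutions. A positive maximum of $w$ cannot be interior, since there the left-hand side is at most $-bw<0$. It cannot be on the boundary either, since Hopf's lemma would give $\partial_\nu w>0$. The same argument excludes a negative minimum, so $w\equiv0$. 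Alternatively, state uniqueness only in a $C^{2,\alpha}$-neighbourhood of $\Phi$. That is exactly what the implicit function theorem gives, and it is all the bifurcation argument uses. Your evenness and $\phi_{0,T}=\Phi$ conclusions are unaffected, since they already follow from local uniqueness in the even spaces where you set up $\mathcal N$.
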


\begin{proof}
Given $T>0$ and $v\in C_{e}^{2,\alpha}(\mathbb{R}/\mathbb{Z})$, there exists a unique solution of \eqref{capil_Neumann} \cite[Theorem 2.1]{Gerhardt1976global}. Thus, there exists a unique solution $\phi \in C_{e}^{2,\alpha}(C^{1}_1)$ that solves \eqref{capil_4}.

Next, let us show the smooth dependence of $\phi$. For $\psi \in C_{e}^{2,\alpha}(C^{1}_1)$, we define the nonlinear mapping $\mathcal{N}: C_{e}^{2,\alpha}(\mathbb{R}/\mathbb{Z}) \times \mathbb{R}_+ \times C_{e}^{2,\alpha}(C^{1}_1) \rightarrow C_{e}^{\alpha}(C^{1}_1) \times C_{e}^{1,\alpha}(\partial C_1^1)$ by
\begin{equation} \label{defN}
\mathcal{N}(v, T, \psi) := \left(L_{v,T} (\Phi+\psi) - b(\Phi+\psi),
\frac{y \sqrt{1+v'^2 T^{-2}}}{1+v}\cdot (\Phi+\psi)_y - \frac{v' T^{-2}}{\sqrt{1+v'^2 T^{-2}}} \cdot (\Phi+\psi)_s\right),
\end{equation}
where $L_{v,T}$ is shown in \eqref{Lv}. It is clear that $\mathcal{N}$ is a smooth ($C^{\infty}$) mapping and that $\mathcal{N}(0, T, 0) = 0$ for any $T>0$, since $\Phi$ is the exact solution when the perturbation $v$ vanishes.

Now we calculate the Fréchet derivative of $\mathcal{N}$ with respect to $\psi$ evaluated at $(0, T, 0)$. By referring to \eqref{Lv}, we obtain
\begin{equation*}
  \mathcal{N}(0, T, \psi) = \left(L_{0,T} (\Phi+\psi) - b(\Phi+\psi), y (\Phi+\psi)_y \right),
\end{equation*}
where
\begin{equation*}
  L_{0,T} (\Phi+\psi)= \left( \frac{(\Phi+\psi)_y}{\left(1+(\Phi+\psi)_y^2+T^{-2}\psi_s^2\right)^{\frac{1}{2}} } \right)_y
   +\frac{1}{T^2} \left( \frac{(\Phi+\psi)_s}{\left(1+(\Phi+\psi)_y^2+T^{-2}\psi_s^2\right)^{\frac{1}{2}} } \right)_s.
\end{equation*}
Then, by a direct computation, the Fréchet derivative of $\mathcal{N}$ is a linear operator given by
\begin{equation*}
  D_{\psi}\mathcal{N}|_{(0,T,0)}(\psi) = \left(\mathfrak{L}_{0,T} (\psi) - b\psi, \psi_{\nu}\right),
\end{equation*}
where
\begin{equation*}
  \mathfrak{L}_{0,T}(\psi)
  =\left(\frac{\psi_y}{\left(1+\Phi_y^2\right)^{\frac{3}{2}}}\right)_y
  +\frac{1}{T^2}\left(\frac{\psi_s}{\left(1+\Phi_y^2\right)^{\frac{1}{2}}}\right)_s
\end{equation*}
is the linearization of the differential operator $L_{0,T}$ at the trivial solution $\Phi$. Note that $\Phi_y=\Phi'$ since $\Phi$ depends only on $y$.

Consider the following problem:
\begin{equation}\label{capil_2}
\begin{cases}
  \mathfrak{L}_{0,T} (\psi) - b\psi = f & \text{in } C^1_{1},\\
   \psi_{\nu}= g & \text{on } \partial C^1_{1}.
\end{cases}
\end{equation}
Note that the operator $\mathfrak{L}_{0,T}$ is uniformly elliptic since $\Phi_y$ is bounded. Since $b > 0$, the existence and uniqueness of solutions to this linear Neumann boundary value problem are guaranteed by standard linear elliptic theory (see, e.g., \cite[Chapter 8]{MR1814364} or \cite{MR2597943, MR3059278}). Therefore, $D_{\psi}\mathcal{N}$ is an invertible bounded linear operator at $(0,T,0)$ from $C_{e}^{2,\alpha}(C^{1}_1)$ to $C_{e}^{\alpha}(C^{1}_1) \times C_{e}^{1,\alpha}(\partial C_1^1)$. Therefore, the Implicit Function Theorem yields the existence of a unique, smooth map $(v, T) \mapsto \psi(v,T) \in C_{e}^{2,\alpha}(C^{1}_1)$, defined for $v$ in a sufficiently small neighborhood of $0$ in $C_{e}^{2,\alpha}(\mathbb{R}/\mathbb{Z})$, such that $\mathcal{N}(v, T, \psi(v,T)) = 0$.

Transforming back to the original problem, the function $\phi_{v,T} := \Phi + \psi(v,T)$ solves \eqref{capil_4}. Furthermore, its smooth dependence on $v$ and $T$ follows directly from the smoothness of the implicit function mapping.

\end{proof}

\medskip

For any $T>0$ and $v\in C_{e}^{2,\alpha}(\mathbb{R}/\mathbb{Z})$, we define the nonlinear operator $G$ as follows. Let $\mathcal{U} \subset C^{2,\alpha}_{e,m}(\mathbb{R} / \mathbb{Z})$ be a small neighborhood of $0$. Define $G: \mathcal{U}\times \mathbb{R}_+ \rightarrow C^{2,\alpha}_{e,m}(\mathbb{R} / \mathbb{Z})$ as:
\begin{equation}\label{G}
G(v,T)=\phi_{v,T}|_{\partial C^1_{1}}-\frac{1}{|\partial C^1_{1}|}\int_{\partial C^{1}_{1}} \phi_{v,T}
=\phi_{v,T}|_{\partial C^1_{1}}-\frac{1}{2}\int_{\partial C^{1}_{1}} \phi_{v,T},
\end{equation}
where $|\partial C^1_{1}|$ is the measure of $\partial C^1_{1}$ and $\phi_{v,T}$ is the solution of \eqref{capil_4}.

It is clear that $G(v,T)=0$ if and only if the solution $u$ of \eqref{capil_Neumann} is constant on $\partial C^T_{1+v}$. Hence, $u$ solves the capillary problem with constant boundary value. In addition, $G(0,T)=0$ for any $T>0$. Our goal is to identify a branch of nontrivial solutions $(v,T)$ to the equation $G(v,T)=0$ bifurcating from some point $(0, T_{*})$. To this end, we employ a local bifurcation argument. This approach naturally leads to the analysis of the linearization of $G$ at $(0,T)$, which will be carried out in the next section.

\section{The linearization of the operator $G$}
\label{sec:linearized_operator}

In this section, we will compute the Fr\'{e}chet derivative of the operator $G$ with respect to $v$ at $(0,T)$. We denote such derivative by $H_T$, i.e.,
\begin{equation*}
  H_T:=D_v G(0,T).
\end{equation*}

In fact, in the following, given $w\neq 0$, we will compute the directional derivative along $w$, i.e., $H_T(w)$. Set $v(s)=\eta w(s)$ where $\eta \in \mathbb{R}$.
By noting that \eqref{W} and \eqref{Lv}, we have
\begin{equation*}
  W=\left(1+\frac{1+y^2 \eta^2 w'^2 T^{-2}}{(1+\eta w)^2} \cdot \phi_y^2 + \frac{1}{T^2} \phi_s^2 - \frac{2y \eta w' T^{-2}}{1+\eta w} \cdot \phi_y \phi_s\right)^{\frac{1}{2}}.
\end{equation*}
and
\begin{align*}
L_{\eta, w,T} (\phi):=
& \frac{1+y^2 \eta^2 w'^2 T^{-2}}{(1+\eta w)^2} \cdot \left( \frac{\phi_y}{W} \right)_y
   + \frac{1}{T^2} \cdot \left( \frac{\phi_s}{W} \right)_s \\
& - \frac{y \eta w' T^{-2}}{1+\eta w} \cdot \left( \frac{\phi_s}{W} \right)_y - \frac{y \eta w' T^{-2}}{1+\eta w} \cdot \left( \frac{\phi_y}{W} \right)_s \\
& - \frac{y \eta w'' T^{-2}}{1+\eta w} \cdot \frac{\phi_y}{W} + \frac{3 y \eta^2 w'^2 T^{-2}}{(1+\eta w)^2} \cdot \frac{\phi_y}{W}.
\end{align*}
Then, we have that $\phi$ satisfies the following:
\begin{equation}\label{capil_3}
\begin{cases}
  L_{\eta, w,T} (\phi)- b\phi =0 &~~\mbox{in}~~ C^1_{1},\\
  \frac{y \sqrt{1+\eta^2 w'^2 T^{-2}}}{1+\eta w}\cdot \phi_y-\frac{\eta w' T^{-2}}{\sqrt{1+\eta^2 w'^2 T^{-2}}} \cdot \phi_s=\kappa&~~\mbox{on}~~\partial C^1_{1}.
\end{cases}
\end{equation}

\medskip

Note that if $\eta=0$, \eqref{capil_3} has the same solution as \eqref{capi_model}, i.e. $\phi |_{\eta=0}=\Phi$, and $\Phi_s=0$. Thus, we obtain
\begin{equation*}
W|_{\eta=0}=\left(1+\Phi_y^2\right)^{\frac{1}{2}}.
\end{equation*}

Let
\begin{equation*}
  \dot{\phi}=\frac{d \phi}{d \eta}\bigg|_{\eta=0}, \quad \dot{W}=\frac{d W}{d \eta}\bigg|_{\eta=0}.
\end{equation*}
Then we obtain
\begin{equation*}
  \dot{W}=\left(1+\Phi_y^2\right)^{-\frac{1}{2}}
  \left(\Phi_y \dot \phi_y - \Phi_y^2 w\right).
\end{equation*}

Note that $\Phi$ depends only on $y$. Then, by taking the derivative in \eqref{capil_3} with respect to $\eta$ and letting $\eta=0$, we obtain that $\dot{\phi}$ satisfies
\begin{equation}\label{phi_dot}
\begin{cases}
  \mathcal{L}_{w,T} (\dot \phi)- b\dot \phi =0 &~~\mbox{in}~~ C^1_{1},\\
  \dot \phi_y-w \Phi_y=0 &~~\mbox{on}~~\partial C^1_{1},
\end{cases}
\end{equation}
where
\begin{align*}
  \mathcal{L}_{w,T} (\dot \phi)=&\left(\frac{\dot \phi_y}{(1+\Phi_y^2)^{\frac{3}{2}}}\right)_y
  +\left( \frac{\Phi_y^3 w}{(1+\Phi_y^2)^{\frac{3}{2}}}\right)_y
  -2w\left(\frac{\Phi_y}{(1+\Phi_y^2)^{\frac{1}{2}}}\right)_y\\
  &+T^{-2} \left( \frac{\dot \phi_s}{(1+\Phi_y^2)^{\frac{1}{2}}}\right)_s
  -yw'' T^{-2} \cdot \frac{\Phi_y}{(1+\Phi_y^2)^{\frac{1}{2}}}.
\end{align*}
Let $\hat \psi=\dot \phi-yw\Phi_y$. Then we have
\begin{equation}\label{psi_y_s}
  \hat \psi_y=\dot \phi_y -yw\Phi_{yy}-w\Phi_y,\quad
  \hat \psi_s=\dot \phi_s-yw'\Phi_y.
\end{equation}
By inserting \eqref{psi_y_s} into \eqref{phi_dot}, we obtain
\begin{equation}\label{psi_hat_1}
\begin{cases}
  \hat {\mathcal{L}}_{w,T} (\hat \psi)- b(\hat \psi+yw\Phi_y)=0 &~~\mbox{in}~~ C^1_{1},\\
  \hat \psi_y+yw\Phi_{yy}=0 &~~\mbox{on}~~\partial C^1_{1},
\end{cases}
\end{equation}
where
\begin{align*}
   \hat {\mathcal{L}}_{w,T} (\hat \psi)=
   \left( \frac{\hat \psi_y}{(1+\Phi_y^2)^{\frac{3}{2}}}\right)_y
  +T^{-2} \left( \frac{\hat \psi_s}{(1+\Phi_y^2)^{\frac{1}{2}}}\right)_s
   +\left( \frac{yw \Phi_{yy}}{(1+\Phi_y^2)^{\frac{3}{2}}}\right)_y
   -w\left(\frac{\Phi_y}{(1+\Phi_y^2)^{\frac{1}{2}}}\right)_y.
\end{align*}

Recall that when $\eta=0$, $\Phi$ is the solution of \eqref{capi_model}, i.e., $\Phi$ satisfies
\begin{equation}\label{b_phi}
  \left(\frac{\Phi_{y}}{(1+\Phi_y^2)^{\frac{1}{2}}}\right)_y-b\Phi
  =\frac{\Phi_{yy}}{(1+\Phi_y^2)^{\frac{3}{2}}}-b\Phi=0.
\end{equation}
By taking derivatives with respect to $y$, we obtain
\begin{equation}\label{eq_y}
  \left( \frac{\Phi_{yy}}{(1+\Phi_y^2)^{\frac{3}{2}}}\right)_y-b\Phi_y=0.
\end{equation}
%That is,
%\begin{equation}\label{eq_y2}
%  ywb\phi_y=yw\left(\frac{1}{1+\phi_y^2}\cdot \frac{\phi_{yy}}{\sqrt{1+\phi_y^2}}\right)_y
%\end{equation}
By combing \eqref{b_phi} and \eqref{eq_y}, \eqref{psi_hat_1} becomes
\begin{equation}\label{psi_hat}
\begin{cases}
  \mathrm{div} (A\cdot \nabla \hat \psi)- b \hat \psi =0 &~~\mbox{in}~~ C^1_{1},\\
  \hat \psi_y+b\Phi \left(1+\Phi_y^2\right)^{\frac{3}{2}} yw=0 &~~\mbox{on}~~\partial C^1_{1},
\end{cases}
\end{equation}
where
\begin{equation} \label{matrix}
	A=
\left(
\begin{matrix}
\left(1+\Phi_y^2\right)^{-\frac{3}{2}} & 0\\
0 & T^{-2} \left(1+\Phi_y^2\right)^{-\frac{1}{2}}
\end{matrix}
\right).
\end{equation}
Note that $A$ depends only on $y$ and $T$. Since $\Phi=c$ and $\Phi_y^2=\kappa^2$ on $\partial C_1^1$, \eqref{psi_hat} reduces to
\begin{equation}\label{psi_hat_2}
\begin{cases}
  \mathrm{div} (A\cdot \nabla \hat \psi)- b \hat \psi =0 &~~\mbox{in}~~ C^1_{1},\\
  \hat \psi_{\nu}+\hat c w=0 &~~\mbox{on}~~\partial C^1_{1},
\end{cases}
\end{equation}
where $\hat c=bc \left(1+\kappa^2\right)^{3/2} >0$ and $\hat \psi_{\nu}$ represents as usual the normal derivative about $\partial C^1_{1}$.

\medskip

In the following, we verify that $\hat{\psi} \in C_{e,m}^{2,\alpha}(C_1^1)$. Since $w\in C^{2,\alpha}$ and $A$ is smooth, by the standard regularity theory, $\hat{\psi} \in C^{2,\alpha}(C_1^1)$ (in fact, $\hat{\psi} \in C^{3,\alpha}(C_1^1)$). Note that $\hat{\psi}(-y,s)$ and $\hat{\psi}(y,-s)$ are also solutions of \eqref{psi_hat_2}. By the uniqueness, $\hat{\psi}(y,s)=\hat{\psi}(-y,s)=\hat{\psi}(y,-s)$. That is, $\hat{\psi}$ is even both in $y$ and $s$.

By noting $w\in C_{e,m}^{2,\alpha}(\mathbb{R}/\mathbb{Z})$, we have
\begin{equation}\label{int_w}
  \int_{0}^{1} w dt =0.
\end{equation}
We define $\Psi \in C_e^{2,\alpha}(C_1^1)$ as the unique solution of the following problem
\begin{equation}\label{Psi}
\begin{cases}
  \mathrm{div} \left(A \cdot \nabla \Psi \right)-b\Psi=0 &~~\mbox{in}~~ C^1_{1},\\
  \Psi=1 &~~\mbox{on}~~\partial C^1_{1}.
\end{cases}
\end{equation}
The existence and uniqueness of the solution $\Psi$ for the Dirichlet problem \eqref{Psi} is a consequence of the classical Schauder theory for linear uniformly elliptic equations (see, e.g., \cite[Theorem 6.13]{MR1814364} or \cite[Chapter 4]{MR2777537}). Multiplying the equation in \eqref{Psi} by $\hat \psi$ and the equation in \eqref{psi_hat_2} by $\Psi$, followed by integration by parts in both expressions, we obtain
\begin{equation*}
  \int_{\partial C_1^1} A^{11} \Psi_{\nu} \hat \psi =\int_{\partial C_1^1} A^{11} \hat \psi_v \Psi.
\end{equation*}
Note that $\Psi$, $\Psi_{\nu}$ and $A^{11}$ are positive constants on $\partial C_1^1$. Then, by the boundary condition of \eqref{psi_hat_2} , there exists a positive constant $C$ such that
\begin{equation*}
  \int_{\partial C_1^1} \hat \psi =-C \hat c\int_{\partial C_1^1} w.
\end{equation*}
By considering \eqref{int_w}, we obtain
\begin{equation}\label{int_hat-psi}
  \int_{\partial C_1^1} \hat \psi =0.
\end{equation}
Therefore, $\hat{\psi} \in C_{e,m}^{2,\alpha}(C_1^1)$.

Finally, by recalling the expression of G in \eqref{G}, $\hat \psi =\dot \phi -y w \Phi_y$ and
\begin{equation*}
  y \Phi_y=\Phi_{\nu}=\kappa~~\mbox{ on }~~\partial C^1_{1},
\end{equation*}
we obtain
\begin{equation}\label{H_T}
  \begin{aligned}
    H_T(w)=&D_{v} G |_{v=0} (w)=\frac{d}{d \eta} G (\eta w, T)  \bigg |_{\eta=0}\\
    =&\dot \phi |_{\partial C^1_{1}}-\frac{1}{2}\int_{\partial C^{1}_{1}} \dot \phi\\
    =&\hat \psi |_{\partial C^1_{1}}+\kappa w
    -\frac{1}{2}\int_{\partial C^{1}_{1}} \left(\hat \psi+\kappa w\right)\\
    =&\hat \psi |_{\partial C^1_{1}}+\kappa w.
  \end{aligned}
\end{equation}
%Since $\hat{\psi}$ satisfies \eqref{psi_hat_2} and \eqref{int_hat-psi}, it can be verified that $\hat{\psi} \in C_{e,m}^{2,\alpha}(C_1^1)$.

\medskip

Next, we present some properties of $H_T$.

\begin{lemma} \label{H_T_Fredholm}
For any $T>0$, the operator
\begin{equation*}
  H_{T}:C_{e,m}^{2,\alpha}(\mathbb{R}/\mathbb{Z})\rightarrow C_{e,m}^{2,\alpha}(\mathbb{R}/\mathbb{Z})
\end{equation*}
is a linear self-adjoint operator and a Fredholm operator of index zero.
 \end{lemma}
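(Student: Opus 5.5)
Write $H_T = \kappa\,\mathrm{Id} + K_T$, where $K_T(w) := \hat\psi|_{\partial C^1_1}$ and $\hat\psi$ is the solution of \eqref{psi_hat_2} with datum $w$.
The plan is to prove two things: that $K_T$ is compact, and that $K_T$ is symmetric with respect to the $L^2(\partial C^1_1)$ pairing.

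\textbf{Well-definedness and linearity.} Since $b>0$ and $A$ is uniformly positive definite, the Neumann problem \eqref{psi_hat_2} is uniquely solvable by the theory used in Proposition \ref{Pr30}. Hence $w\mapsto\hat\psi$ is linear. By the computation leading to \eqref{int_hat-psi}, $\hat\psi$ is even in $y$ and in $s$ and has zero mean on $\partial C^1_1$. Because $\hat\psi$ is even in $y$, its traces on $\{y=1\}$ and $\{y=-1\}$ coincide, so $K_T w$ is a single function of $s$ in $C^{2,\alpha}_{e,m}(\mathbb{R}/\mathbb{Z})$. Thus $H_T$ maps the space into itself.

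\textbf{Compactness.} This is the key step. For $w\in C^{2,\alpha}$ the Neumann datum $-\hat c w$ lies in $C^{2,\alpha}(\partial C^1_1)$. Schauder estimates for the oblique/Neumann problem with smooth coefficients depending only on $y$ then give $\|\hat\psi\|_{C^{3,\alpha}(C^1_1)}\le C\|w\|_{C^{2,\alpha}}$. The trace map therefore sends $C^{2,\alpha}$ boundedly into $C^{3,\alpha}(\mathbb{R}/\mathbb{Z})$. By Arzelà–Ascoli, the embedding $C^{3,\alpha}(\mathbb{R}/\mathbb{Z})\hookrightarrow C^{2,\alpha}(\mathbb{R}/\mathbb{Z})$ is compact, so $K_T$ is compact.

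The main obstacle is justifying the gain of one derivative up to the boundary. I would handle it as follows:
\begin{itemize}
\item Differentiate \eqref{psi_hat_2} in $s$, which is legitimate because $A$ is independent of $s$.
\item Conclude that $\hat\psi_s$ solves the same problem with datum $-\hat c w'\in C^{1,\alpha}$, so $\hat\psi_s\in C^{2,\alpha}$.
\item Use the equation to recover $\hat\psi_{yy}$, and from it control of the remaining third derivatives.
\end{itemize}
The same conclusion can also be read off from a Fourier expansion in $s$.

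\textbf{Fredholm index zero.} Since $\kappa>0$, $H_T=\kappa(\mathrm{Id}+\kappa^{-1}K_T)$ is a nonzero multiple of a compact perturbation of the identity. By Riesz–Schauder theory it is Fredholm of index zero.

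\textbf{Self-adjointness.} Take $w_1,w_2$ with corresponding solutions $\hat\psi_1,\hat\psi_2$. Multiply the equation for $\hat\psi_1$ by $\hat\psi_2$ and vice versa, and integrate by parts over $C^1_1$. The interior terms $\int A\nabla\hat\psi_1\cdot\nabla\hat\psi_2+b\hat\psi_1\hat\psi_2$ are symmetric, which yields
\[
\int_{\partial C^1_1}A^{11}\,(\hat\psi_1)_\nu\,\hat\psi_2=\int_{\partial C^1_1}A^{11}\,(\hat\psi_2)_\nu\,\hat\psi_1 .
\]
On the boundary $A^{11}=(1+\kappa^2)^{-3/2}$ is constant, and $(\hat\psi_i)_\nu=-\hat c w_i$. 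Hence $\int_{\partial C^1_1} w_1\hat\psi_2=\int_{\partial C^1_1} w_2\hat\psi_1$, that is, $\langle K_Tw_1,w_2\rangle=\langle w_1,K_Tw_2\rangle$ in $L^2(\mathbb{R}/\mathbb{Z})$ after using evenness in $y$. Adding $\kappa\,\mathrm{Id}$ preserves symmetry, so $H_T$ is self-adjoint in this $L^2$ sense.

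Alternatively, expand $w=\sum_k a_k\cos(2\pi k s)$ for $k\ge1$. Separation of variables shows that $H_T$ acts diagonally, $H_T\cos(2\pi ks)=\sigma_k(T)\cos(2\pi ks)$, with $\sigma_k\to\kappa$. This confirms both self-adjointness and the Fredholm property directly.
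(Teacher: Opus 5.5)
Your proposal is correct and follows essentially the same route as the paper. Both write $H_T=\kappa\,\mathrm{Id}+K_T$ with $K_T$ compact, because $\hat\psi\in C^{3,\alpha}$ and $C^{3,\alpha}\hookrightarrow C^{2,\alpha}$ is compact. Both obtain symmetry from Green's identity for \eqref{psi_hat_2}, using the boundary relation $(\hat\psi_i)_\nu=-\hat c\,w_i$ and the fact that $A^{11}$ is constant on $\partial C^1_1$.

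The differences are only in detail. You justify the one-derivative gain by differentiating in the tangential variable $s$ and recovering $\hat\psi_{yy}$ from the equation, whereas the paper simply cites Neumann Schauder regularity. Your Fourier diagonalization is an optional cross-check: on its own it would not immediately give the Fredholm property on H\"older spaces, but your main argument does not rely on it.
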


\begin{proof}
Given $v_i \in C_{e,m}^{2,\alpha}(\mathbb{R}/\mathbb{Z})$, $i=1,2$, there exist unique $\hat \psi (v_i)$ satisfies \eqref{psi_hat_2} (with $w$ replaced by $v_i$). For $\hat \psi (v_1)$, we multiply the equation in \eqref{psi_hat_2} by $\hat \psi (v_2)$ and integrate by parts to obtain
\begin{equation}\label{v_i_e}
  \int_{C_1^1} \left(A^{ij} \partial_j \hat \psi (v_1) \partial_i \hat \psi (v_2)
  +b\hat \psi(v_1)\hat \psi(v_2)\right)
  =\int_{\partial C_1^1} A^{11}\partial_{\nu} \hat \psi(v_1) \hat \psi (v_2).
\end{equation}
In addition, from the boundary condition of \eqref{psi_hat_2}, we get
\begin{equation}\label{v_i_b}
  v_i=-\frac{\partial_{\nu} \hat \psi(v_i)}{\hat c}
  =-\frac{A^{11} \partial_{\nu} \hat \psi(v_i)}{b c}.
\end{equation}

Then, by combining \eqref{H_T}, \eqref{v_i_e} and \eqref{v_i_b}, we have
\begin{equation*}
  \begin{aligned}
  &\int_0^1  H_{T}(v_1) v_2-\int_0^1 H_{T}(v_2) v_1\\
=&\int_{\partial C_1^1} \left(\hat \psi(v_1) v_2+\kappa v_1v_2\right)
  -\int_{\partial C_1^1} \left(\hat \psi(v_2) v_1+\kappa v_2 v_1\right)\\
=&-\int_{\partial C_1^1} \left(\hat \psi(v_1) \frac{A^{11} \hat \psi_{\nu}(v_2)}{b c}
        -\hat \psi(v_2) \frac{A^{11} \hat \psi_{\nu}(v_1)}{b c}\right)\\
=&-\frac{1}{bc}\int_{C_1^1} \left(A^{ij} \hat \psi_i (v_1) \cdot \hat \psi_j(v_2)+b\hat \psi(v_1) \hat \psi(v_2)
  -A^{ij} \hat \psi_i(v_2) \cdot \hat \psi_j(v_1)-b\hat \psi(v_1)\hat \psi (v_2)\right)\\
=&0.
    \end{aligned}
\end{equation*}
Therefore, the operator $H_{T}$ is self-adjoint.

By the regularity for Neumann boundary condition (see \cite[Theorem 6.30]{MR1814364}), $\hat{\psi}\in C^{3,\alpha}$. As $C^{3,\alpha}$ is compactly embedded into $C^{2,\alpha}$, $H_T$ can be written as the sum of a compact operator and a constant multiple of the identity operator. Hence, $H_T$ is a Fredholm operator of index zero (see \cite[Theorem 6.6]{MR2759829}).
\end{proof}

\section{Study of the linearized operator $H_T$}
\label{sec:study of H_T}

A bifurcation of the branch $(0,T)$ of solutions of $G(v, T)=0$ might appear only at points $(0,T_{*})$ such that $H_{T_{*}}$ becomes degenerate. In this section, we analyze the linearized operator $H_T$ to identify a critical value $T_{*}$ such that $H_{T_{*}}(w)=0$ for some $w\neq 0$.

Recall that for any given $w$, there exists a unique $\hat \psi$ satisfying \eqref{psi_hat_2}, such that the linearized operator is $H_T(w)=\hat \psi |_{\partial C^1_{1}}+\kappa w$. Seeking a non-trivial function $w \neq 0$ that satisfies $H_T(w)=0$ is equivalent to seeking a nonzero solution in $C_{e,m}^{2,\alpha}(C_1^1)$ (see the definition of \eqref{phi_e_m}) of the following corresponding problem:
\begin{equation}\label{capil_egen_1}
\begin{cases}
  \mathrm{div} (A\cdot \nabla \psi)- b \psi =0 &~~\mbox{in}~~ C_1^1,\\
  \psi_{\nu}-\bar c \psi=0 &~~\mbox{on}~~\partial C_1^1,
\end{cases}
\end{equation}
where
\begin{equation}\label{e5.13}
\bar c=\frac{\hat c}{\kappa}=\frac{bc \left(1+\kappa^2\right)^{\frac{3}{2}}}{\kappa}>0.
\end{equation}

To clarify this equivalence, suppose that there exists $w\neq 0$ such that $H_T(w)=0$, i.e., $\hat \psi+\kappa w=0$ on $\partial C_1^1$, where $\hat{\psi}$ is the solution of \eqref{psi_hat_2}. Then, by combining with the boundary condition in \eqref{psi_hat_2}, we obtain the boundary condition in \eqref{capil_egen_1}. Hence, $\hat{\psi}\in C_{e,m}^{2,\alpha}(C_1^1)$ is a nonzero solution of \eqref{capil_egen_1}.
Conversely, suppose that $\psi \in C_{e,m}^{2,\alpha}(C_1^1)$ is a nonzero solution of \eqref{capil_egen_1}. Set $w=-\psi/\kappa$ on $\partial C_1^1$. It follows that $w$ has mean zero and $\psi$ satisfies \eqref{psi_hat_2}, and hence $H_T(w)=\psi |_{\partial C_1^1} +\kappa w=\psi |_{\partial C_1^1} -\psi |_{\partial C_1^1}=0$ as required.

Based on the above discussion, let us consider the following corresponding eigenvalue problem:
\begin{equation}\label{capil_egen}
\begin{cases}
  \mathrm{div} (A\cdot \nabla \psi)- b \psi +\lambda \psi =0 &~~\mbox{in}~~ C_1^1,\\
  \psi_{\nu}-\bar c \psi=0 &~~\mbox{on}~~\partial C_1^1.
\end{cases}
\end{equation}
Finding a nonzero solution to \eqref{capil_egen_1} is equivalent to showing that $0$ is an eigenvalue of \eqref{capil_egen}. In what follows, we demonstrate that $0$ is indeed the principal eigenvalue by choosing a proper $T$.
If we multiply the first equation of \eqref{capil_egen} by $\psi$ and we integrate by parts using the boundary condition, we get
\[
\int_{C_1^1}
A^{ij}\psi_i\psi_j
+
b\int_{C_1^1}\psi^2
-
\frac{bc}{\kappa}
\int_{\partial C_1^1}\psi^2
=
\lambda
\int_{C_1^1}\psi^2\,.
\]
Define then the quadratic form $Q^{T}: H_{e,m}^1(C^1_1) \rightarrow\mathbb{R}$,
\begin{equation}\label{QT_1}
Q^{T}(\psi):=\int_{C^1_1}\left(A^{ij} \psi_i \psi_j+b\psi^2 \right)
- \frac{bc}{\kappa} \int_{\partial C^1_1} \psi^2.
\end{equation}
The principal eigenvalue of \eqref{capil_egen} is given by
\begin{equation}\label{lambda_1}
  \lambda_1^T= \inf \left\{Q^{T}(\psi): \psi\in H^1_{e,m}(C^1_1),~\|\psi\|_{L^2(C_1^1)}=1 \right\}.
\end{equation}

\medskip

Next, it will be useful to define the quadratic form given by the restriction of $Q^T$ to functions independent on the variable $s$, i.e., the quadratic form $Q: H_{e}^1(B) \rightarrow\mathbb{R}$,
\begin{equation}\label{Q_B_1}
Q(\psi):=\int_{B}\left(A^{11} {\psi'} ^2+b\psi^2 \right)
- \frac{2bc}{\kappa} {\psi}^2(1),
\end{equation}
where $A^{11}=\left(1+{\Phi'}^2\right)^{-3/2}$ (see \eqref{matrix}).
In addition, we define
\begin{equation}\label{tau_1}
\tau_1=\inf \left\{Q(\psi): \psi\in H^1_{e}(B),~\|\psi\|_{L^2(B)}=1\right\}.
\end{equation}
It is standard that $\tau_1$ is the principal eigenvalue of the following problem:
\begin{equation}\label{B_egen}
\begin{cases}
  (A^{11} \psi')'- b \psi +\tau \psi =0 &~~\mbox{in}~~ B,\\
  \psi_{\nu}-\bar c \psi=0 &~~\mbox{on}~~\partial B.
\end{cases}
\end{equation}

\medskip

Now, we characterize the eigenvalue $\tau_1$.

\begin{lemma} \label{tau_1_neg}
There holds: $\tau_1 < 0$.
 \end{lemma}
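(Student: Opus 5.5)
The plan is to exhibit an explicit even test function $\chi\in H^1_e(B)$ with $Q(\chi)<0$. Since $Q$ is homogeneous of degree two, normalizing $\chi$ in $L^2(B)$ then gives $\tau_1<0$ by \eqref{tau_1}. A constant function does not work in general: $Q(1)=2b-2bc/\kappa$ is negative only when $c>\kappa$. Concentrating test functions near $y=\pm1$ does not obviously work either. So I will use the natural even solution of the linearized ODE and compare it with the odd solution that comes from translation invariance.

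\textbf{Step 1 (the even solution).} Let $\chi$ be the solution of
\[
(A^{11}\chi')'-b\chi=0 \ \text{ in } B,\qquad \chi(\pm1)=1 .
\]
It exists and is unique since $b>0$ and $A^{11}=(1+\Phi'^2)^{-3/2}$ is smooth and positive. By uniqueness $\chi$ is even, and by the maximum principle $\chi>0$ on $\bar B$. Multiplying the equation by $\chi$ and integrating by parts, the interior terms of $Q(\chi)$ cancel and only boundary terms remain:
\[
Q(\chi)=2A^{11}(1)\chi'(1)-\frac{2bc}{\kappa}.
\]
So it suffices to show $\chi'(1)<\dfrac{bc}{\kappa A^{11}(1)}$.

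\textbf{Step 2 (the odd solution and the Wronskian).} Differentiating \eqref{b_phi} gives \eqref{eq_y}, i.e. $(A^{11}\Phi'')'=b\Phi'$. Hence $\Phi'$ solves the same ODE as $\chi$ (it is odd, by translation invariance). For two solutions of $(A^{11}\psi')'=b\psi$, the weighted Wronskian
\[
\mathcal{W}=A^{11}\bigl(\chi\,\Phi''-\chi'\,\Phi'\bigr)
\]
has $\mathcal{W}'=\chi(A^{11}\Phi'')'-\Phi'(A^{11}\chi')'=b\chi\Phi'-b\Phi'\chi=0$, so it is constant. At $y=0$ we have $\Phi'(0)=0$, and by \eqref{b_phi}, $\Phi''(0)=b\Phi(0)(1+\Phi'(0)^2)^{3/2}>0$, using $\Phi>0$ (positivity of the solution, Proposition \ref{Pr30}). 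Therefore $\mathcal{W}=A^{11}(0)\chi(0)\Phi''(0)>0$.

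\textbf{Step 3 (evaluation at $y=1$).} At $y=1$ we have $\chi(1)=1$ and $\Phi'(1)=\kappa$. By \eqref{b_phi}, $A^{11}(1)\Phi''(1)=b\Phi(1)=bc$. Thus $\mathcal{W}>0$ reads
\[
bc-A^{11}(1)\,\kappa\,\chi'(1)>0, \qquad\text{i.e.}\qquad \chi'(1)<\frac{bc}{\kappa A^{11}(1)} .
\]
Plugging this into Step 1 gives $Q(\chi)<2bc/\kappa-2bc/\kappa=0$, hence $\tau_1\le Q(\chi)/\|\chi\|^2_{L^2(B)}<0$.

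The main obstacle was finding a test function whose sign could be decided without explicit knowledge of $c$, since naive choices lead to conditions like $c>\kappa$. The Wronskian pairing with $\Phi'$ circumvents this, because $\Phi''(1)$ is tied to $c$ exactly through the equation \eqref{b_phi}. The only remaining point to check carefully is the positivity of $\Phi$, or at least of $\Phi(0)$, which makes $\Phi''(0)>0$. This comes from the positivity of the capillary solution for $0<\theta<\pi/2$. Alternatively, it follows because integrating \eqref{capi_ODE} over $B$ gives $b\int_B\Phi=2\kappa/\sqrt{1+\kappa^2}>0$, and convexity arguments then give positivity.
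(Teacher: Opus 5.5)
Your argument is correct, and it takes a different route from the paper.

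\textbf{The paper's route.} The paper tests $Q$ with $\Psi=|\Phi'|$, the even reflection of the odd translation mode $\Phi'$. It multiplies $(A^{11}\Phi'')'=b\Phi'$ by $\Phi'$ and uses $A^{11}\Phi''=b\Phi=bc$ at $y=\pm1$, which gives exactly $Q(\Psi)=0$ and hence $\tau_1\le 0$. Strictness comes from a regularity argument: if $\tau_1=0$, then $\Psi$ would be a minimizer and therefore a smooth eigenfunction, but $\Psi$ has a corner at $y=0$. That last step tacitly uses $\Phi''(0)=b\Phi(0)\neq 0$.

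\textbf{Your route.} You take the genuine even solution $\chi$ of the ODE with Dirichlet data $1$ and measure its mismatch with $\Phi'$ through the conserved weighted Wronskian. This is a Sturm-type comparison. It gives strict negativity at once, with no appeal to properties of minimizers or eigenfunction regularity. It also gives an explicit value: since $A^{11}(0)=1$, you get $Q(\chi)=-\frac{2}{\kappa}\mathcal{W}=-\frac{2b}{\kappa}\,\Phi(0)\,\chi(0)$, which is a quantitative upper bound for $\tau_1$.

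\textbf{What each needs about $\Phi(0)$.} Your route needs the sign $\Phi(0)>0$, whereas the paper's only needs $\Phi(0)\neq 0$. Besides citing the positivity in Proposition~\ref{Pr30}, you can get $\Phi(0)>0$ directly from the ODE. If $\Phi(0)=0$, then $\Phi(0)=\Phi'(0)=0$ forces $\Phi\equiv 0$ by uniqueness for the initial value problem. If $\Phi(0)<0$, then $\Phi'/\sqrt{1+\Phi'^2}$ has negative derivative $b\Phi$ as long as $\Phi<0$. So $\Phi'$ and $\Phi$ stay negative on $(0,1]$, contradicting $\Phi'(1)=\kappa>0$. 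This makes your vaguer remark about integrating the equation and convexity unnecessary.
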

\begin{proof}
Recall that the trivial profile $\Phi$ satisfies the ODE problem \eqref{capi_ODE}. Differentiating this equation with respect to $x$ yields
\begin{equation*}
  \left(\frac{\Phi''}{(1+{\Phi'}^2)^{\frac{3}{2}}}\right)' - b\Phi' = 0.
\end{equation*}
Multiplying this equation by $\Phi'$ and integrating over $B = (-1, 1)$, we apply integration by parts to obtain
\begin{equation}\label{Phi_y_1}
\int_{B} \frac{{\Phi''}^2}{\left(1+{\Phi'}^2\right)^{\frac{3}{2}}}
+\int_{B} b {\Phi'}^2
-\left(\frac{\Phi'(1) \Phi''(1)}{\left(1+{\Phi'}^2(1)\right)^{\frac{3}{2}}}
-\frac{\Phi'(-1) \Phi''(-1)}{\left(1+{\Phi'}^2(-1)\right)^{\frac{3}{2}}}\right)
=0.
\end{equation}
Furthermore, multiply $\Phi'$ to the equation \eqref{capi_ODE}, we arrive at:
\begin{equation}\label{model_Phi_y}
  \Phi' \Phi'' (1+{\Phi'}^2)^{-\frac{3}{2}} = b\Phi \Phi'.
\end{equation}
%Evaluating this relation at the boundary points $x = \pm 1$ and utilizing the symmetry of the profile, we observe that the second derivative at the boundary is given by
%\begin{equation}\label{model_Phi_y_2}
%  \Phi''(\pm 1) = b\Phi(\pm 1) (1+{\Phi'}^2(\pm 1))^{\frac{3}{2}}.
%\end{equation}
By substituting the boundary evaluations of \eqref{model_Phi_y} back into the boundary terms of \eqref{Phi_y_1}, the energy identity simplifies to
\begin{equation}\label{Phi_y_2}
  \int_{B} \frac{{\Phi''}^2}{(1+{\Phi'}^2)^{\frac{3}{2}}}  + \int_{B} b {\Phi'}^2  = b \left( \Phi(1)\Phi'(1) - \Phi(-1)\Phi'(-1) \right).
\end{equation}

Now we evaluate the one-dimensional quadratic form $Q$ using the test function
\begin{equation*}
  \Psi(y)=
  \begin{cases}
  \Phi'(y) & ~~ y\in (0,1],\\
  -\Phi'(y) & ~~ y\in [-1,0].
  \end{cases}
\end{equation*}
By noting $\Phi'(0)=0$, $\Psi$ is Lipschitz continuous. Hence, we can use $\Psi$ as a test function. Recalling the definition of the coefficient $A^{11} = (1+{\Phi'}^2)^{-3/2}$, we compute:
\begin{equation}\label{QT_Phiy}
\begin{aligned}
  Q(\Psi) &= \int_{B} \big( A^{11} {\Psi'}^2 + b{\Psi}^2 \big)  - \frac{bc}{\kappa} \left({\Psi}^2(1) + {\Psi}^2(-1)\right) \\
  &= \int_{B} \big( A^{11} {\Phi''}^2 + b{\Phi'}^2 \big)  - \frac{bc}{\kappa} \left({\Phi'}^2(1) + {\Phi'}^2(-1)\right) \\
  &= \int_{B} \left( \frac{{\Phi''}^2}{(1+{\Phi'}^2)^{\frac{3}{2}}} + b{\Phi'}^2 \right)  -2bc \kappa \\
  &=2bc \kappa-2bc \kappa=0.
\end{aligned}
\end{equation}

%Since the test function $\Psi$ yields a zero Rayleigh quotient, but is not smooth at $0$, it cannot correspond to the principal eigenvalue $\tau_1$. By the strict variational characterization of the lowest eigenvalue, it follows that $\tau_1 < 0$.

Since the test function $\Psi$ yields a Rayleigh quotient equal to zero, we obtain $\tau_1 \leq 0$. If it was $\tau_1 =0$ then $\Psi$ would have to be a first eigenfunction. However, first eigenfunctions are smooth, whereas $\Psi$ is not smooth at 0. Therefore $\tau_1 \neq 0$ and then $\tau_1 < 0$.

\end{proof}

We now turn to the analysis of $\lambda_1^T$.
\begin{proposition} \label{Pr40}
There exists $T_{*}>0$ such that $\lambda_1^T=0$ at $T_*$. Moreover, $\emph{Ker}(H_{T_{*}})=\mathbb{R}\cos(2\pi s)$. In particular, $\emph{dim Ker}(H_{T_{*}})=1.$
\end{proposition}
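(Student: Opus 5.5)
Separation of variables in $s$ is the starting point. Since $A$ depends only on $y$ and $T$, every $\psi\in H^1_{e,m}(C_1^1)$ can be expanded as $\psi(y,s)=\sum_{k\ge 0}\psi_k(y)\cos(2\pi k s)$ with each $\psi_k\in H^1_e(B)$. The constraint $\int_0^1\psi(1,s)\,ds=0$ forces $\psi_0(1)=0$. Orthogonality of the cosines makes $Q^T$ diagonal:
\[
Q^T(\psi)=Q^T_0(\psi_0)+\frac12\sum_{k\ge1}\Big(Q(\psi_k)+\frac{4\pi^2k^2}{T^2}\int_B (1+\Phi'^2)^{-\frac12}\psi_k^2\Big),
\]
where $Q^T_0(\psi_0)=\int_B(A^{11}\psi_0'^2+b\psi_0^2)$ because the boundary term vanishes. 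Set
\[
\sigma_k(T)=\inf\Big\{Q(\phi)+\tfrac{4\pi^2k^2}{T^2}\int_B(1+\Phi'^2)^{-1/2}\phi^2:\ \phi\in H^1_e(B),\ \|\phi\|_{L^2(B)}=1\Big\}.
\]
Then $\lambda_1^T=\min\{\mu_0,\sigma_1(T),\sigma_2(T),\dots\}$, where $\mu_0>0$ is the Dirichlet-type value for the $k=0$ mode. Moreover $\sigma_k(T)\ge\sigma_1(T)$, with strict inequality for $k\ge2$ because the weight $(1+\Phi'^2)^{-1/2}$ is positive. So for each fixed $T$ the first mode $k=1$ is the only relevant one, and $\lambda_1^T=\min\{\mu_0,\sigma_1(T)\}$.

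Next I study $T\mapsto\sigma_1(T)$. It is an infimum of functions that are affine in $T^{-2}$ and nondecreasing in $T^{-2}$, so it is continuous and nonincreasing in $T$; the weight is positive, so it is in fact strictly decreasing. As $T\to\infty$, $\sigma_1(T)\to\tau_1<0$ by Lemma~\ref{tau_1_neg}; this follows by testing with the first eigenfunction of $Q$ and using monotonicity. As $T\to0$, $\sigma_1(T)\ge\tau_1+4\pi^2T^{-2}\min_B(1+\Phi'^2)^{-1/2}\to+\infty$. The intermediate value theorem then gives a unique $T_*$ with $\sigma_1(T_*)=0$. Since $\mu_0>0$, this yields $\lambda_1^{T_*}=0$.

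Finally I identify the kernel. A nonzero solution of \eqref{capil_egen_1} in $C^{2,\alpha}_{e,m}$ corresponds to eigenvalue $0$ of \eqref{capil_egen}, and hence to a minimizer of $Q^{T_*}$ with $Q^{T_*}=0$. Projecting onto the modes, each component $\psi_k$ must make its own one-dimensional form vanish. At $T=T_*$ this is impossible for $k=0$ (since $\mu_0>0$) and for $k\ge2$ (since $\sigma_k(T_*)>0$). Hence $\psi=\psi_1(y)\cos(2\pi s)$, where $\psi_1$ is a principal eigenfunction of the ODE problem attaining $\sigma_1(T_*)=0$. That principal eigenfunction is simple, because for a Sturm--Liouville problem with Robin conditions the first eigenvalue is simple with a positive eigenfunction. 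Via $w=-\psi/\kappa$ on $\partial C_1^1$ and $\psi_1(1)\ne0$, we conclude $\mathrm{Ker}(H_{T_*})=\mathbb{R}\cos(2\pi s)$.

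The main obstacle is rigorously justifying the mode decomposition and the equality of the constrained infimum with the minimum over modes, in particular the handling of the $k=0$ mode under the mean-zero constraint. A second delicate point is checking $\psi_1(1)\neq0$ so that $w\neq0$; this should follow from positivity of the principal eigenfunction together with the Robin condition $\psi'=\bar c\psi$ at $y=1$.
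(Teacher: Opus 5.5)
Your proposal is correct and follows essentially the paper's route: Fourier decomposition in $s$, the mean-zero constraint acting only on the $k=0$ mode, reduction to the $k=1$ mode, a sign change of the first-mode form between small $T$ (the $T^{-2}$ term dominates) and large $T$ (using $\tau_1<0$), the intermediate value theorem, and identification of the zero-energy states as $\psi_1(y)\cos(2\pi s)$. Your version is somewhat more rigorous in three places the paper leaves implicit: working directly with $\sigma_1(T)$ gives the continuity in $T$ that the paper only asserts for $\lambda_1^T$, together with strict monotonicity and hence a unique $T_*$; showing that every kernel element has $Q^{T_*}=0$ and is therefore a minimizer justifies restricting to minimizers; and invoking simplicity of the Sturm--Liouville principal eigenvalue, with $\psi_1(1)\neq 0$ from the Robin condition, proves $\dim\mathrm{Ker}(H_{T_*})=1$ explicitly, whereas the paper only establishes that $\psi_1$ has constant sign.
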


\begin{proof}
In this proof, subscripts are used as indices and do not indicate derivatives. By the classical calculus of variation, $Q^T$ attains its minimum $\lambda_1^T$ at some $\psi^T$. Since $\psi^T$ is even in $s$, $\psi^T$ admits the following Fourier cosine series expansion:
\begin{equation*}
  \psi^T(y,s)=\sum_{k=0}^{\infty} a_k\psi_k (y) \cos \left(2\pi k s\right),
\end{equation*}
where $a_k$ are constants and
\begin{equation*}
\left \|\psi_k (y) \cos \left(2\pi k s\right) \right\|_{L^2(C_1^1)}=1,~\forall ~k\geq 0.
\end{equation*}
Equivalently,
\begin{equation*}
\int_{-1}^{1}\psi^2_0=1,\quad
\int_{-1}^{1}\psi^2_k=2,~\forall ~k\geq 1.
\end{equation*}
Since we require that $\int_{\partial C^1_1}\psi^T=0$, the $\psi_0$ must satisfy
\begin{equation}\label{e5.2}
\psi_0(1)=\psi_0(-1)=0.
\end{equation}
In addition,
\begin{equation}\label{e5.3}
\left \|\psi^T \right\|_{L^2(C_1^1)}^2=\sum_{k=0}^{\infty}a_k^2=1.
\end{equation}

Substituting the above expansion into \eqref{QT_1} and utilizing the orthogonality of the trigonometric system over the period $T$ (which causes all cross terms to vanish), we obtain
\begin{equation}\label{e5.1}
Q^{T}(\psi^T)=\sum_{k=0}^{\infty} Q^T\left(a_k\psi_k  \cos \left(2\pi k s\right)\right)
=\sum_{k=0}^{\infty} a_k^2Q^T_k(\psi_k),
\end{equation}
where we have defined
\[
Q^T_k(\psi) = Q^T\left(\psi\,  \cos \left(2\pi k t\right)\right)
\]
for $\psi \in H^1_e(B)$. Using \cref{e5.2}, we have
\begin{equation}\label{e5.4}
Q^{T}_0(\psi_0)= \int_{C_1^1} A^{11} \psi_0'^2+b\int_{C_1^1}\psi_0^2
= \int_{-1}^{1} A^{11} \psi_0'^2+b.
\end{equation}
In addition, for $k\geq 1$,
\begin{equation}\label{e5.6}
  \begin{aligned}
    Q^{T}_k(\psi_k)=& \int_{C^{1}_{1}} A^{11}  \psi_k'^2 \cos^2 \left(2\pi k s\right)
    +\int_{C^{1}_{1}} A^{22} \left(2\pi k \right)^2 \psi_k^2 \sin^2 \left(2\pi k s\right)\\
    &+b \int_{C^{1}_{1}} \psi_k^2 \cos^2 \left(2\pi k s\right)
    -\frac{bc}{\kappa} \int_{\partial C^{1}_{1}} \psi_k^2 \cos^2 \left(2\pi k s\right)\\
    =& \frac{1}{2} \int_{-1}^{1} A^{11}\psi_k'^2
    +2 \pi^2k^2 \int_{-1}^{1} A^{22}\psi_k^2+\frac{b}{2} \int_{-1}^{1} \psi_k^2 -\frac{bc}{\kappa} \psi_k^2(1).
  \end{aligned}
\end{equation}
Recall that $A^{11}=\left(1+{\Phi'}^2\right)^{-3/2}$ and $A^{22}=T^{-2} \left(1+{\Phi'}^2\right)^{-1/2}$ (see \eqref{matrix}).

From the expression of $Q^{T}_k$, we know that
\begin{equation*}
Q^{T}_k(\psi)- Q^{T}_1(\psi)=2 \pi^2(k^2-1) \int_{-1}^{1} A^{22}\psi^2\geq 0,~~~~\forall ~\psi\in H_e^1(B),~\forall ~k\geq 2.
\end{equation*}
Since $\psi^T$ realized the minimum of $Q^T$, from this last inequality we have that $a_k\equiv 0$ for any $k\geq 2$. That is, $\psi^T$ can be written as
\begin{equation*}
  \psi^T(y,s)=a_0\psi_0(y)+a_1\psi_1 (y) \cos \left(2\pi s\right).
\end{equation*}
Furthermore, recall the variational characterization of the principal eigenvalue $\tau_1$ in \eqref{tau_1}. Then
\begin{equation*}
\int_{-1}^{1} \left( A^{11}\psi_1'^2 + b\psi_1^2 \right) - \frac{2bc}{\kappa}\psi_1^2(1)
\geq \tau_1 \int_{-1}^{1} \psi_1^2.
\end{equation*}
%Multiplying this inequality by $\frac{T}{2}$ exactly matches the remaining terms in our energy functional. By further assuming the standard normalization condition
%\begin{equation*}
%  \frac{T}{2}\int_{-1}^1 \psi_1^2 = \int_{C^T_1} \psi_1^2 \cos^2\left(\frac{2\pi}{T}t\right) = 1,
%\end{equation*}
%we can simplify the lower bound. Thus, retaining only the first mode ($k=1$), we deduce
Hence,
\begin{equation}\label{e5.7}
  \begin{aligned}
Q^T_1(\psi_1)= & \frac{1}{2} \int_{-1}^{1}A^{11}\psi_1'^2
    +2 \pi^2 \int_{-1}^{1} A^{22}  \psi_1^2
    +\frac{b }{2} \int_{-1}^{1} \psi_1^2-\frac{bc }{\kappa} \psi_1^2(1) \\
    \geq &\frac{\tau_1}{2} \int_{-1}^{1} \psi_1^2
    +2 \pi^2 \int_{-1}^{1} A^{22}\psi_1^2 \\
    =& \tau_1 +2 \pi^2 \int_{-1}^{1} A^{22}  \psi_1^2\\
    =& \tau_1 +\frac{2 \pi^2}{T^2} \int_{-1}^{1} \left(1+\Phi'^2\right)^{-\frac{1}{2}}  \psi_1^2
  \end{aligned}
\end{equation}
and the equality holds if we take $\psi_1\equiv \psi^*$ where $\psi^*$ is the eigenfunction of $Q$ corresponding to $\tau_1$. Since $\tau_1<0$ and $\left(1+\Phi'^2\right)^{-1/2}\leq 1$, for $T$ large enough, we have
\begin{equation}\label{e5.8}
  \begin{aligned}
Q^T_1(\psi^*)=& \tau_1 +\frac{2 \pi^2}{T^2} \int_{-1}^{1} \left(1+\Phi'^2\right)^{-\frac{1}{2}}  \psi^{*2}
\leq \tau_1+\frac{2 \pi^2}{T^2} \int_{-1}^{1} \psi^{*2}
=\tau_1+\frac{4 \pi^2}{T^2}<0.
  \end{aligned}
\end{equation}
Hence, if we take $\psi=\psi^* (y) \cos \left(2\pi s\right)$, then
\begin{equation*}
Q^T(\psi)=Q^T_1(\psi^*)<0.
\end{equation*}
Thus,
\begin{equation}\label{e5.9}
\lambda_1^T=Q^T(\psi^T)\leq Q^T(\psi)<0.
\end{equation}

On the other hand, if $T$ is small enough, by noting $\left(1+\Phi'^2\right)^{-1/2}\geq C$ for some constant $C$, we have
\begin{equation}\label{e5.10}
  \begin{aligned}
Q^T_1(\psi_1)\geq & \tau_1 +\frac{2 \pi^2}{T^2} \int_{-1}^{1} \left(1+\Phi'^2\right)^{-\frac{1}{2}}  \psi_1^2
\geq \tau_1+\frac{2C \pi^2}{T^2} \int_{-1}^{1}\psi_1^2
=\tau_1+\frac{4C \pi^2}{T^2}\geq 1.
  \end{aligned}
\end{equation}
Moreover, note that $Q^T_0(\psi_0)\geq b>0$. Hence, for $T$ small enough,
\begin{equation}\label{e5.11}
  \begin{aligned}
    \lambda_1^T=Q^{T}(\psi^T)=a_0^2Q^T_0(\psi_0)+a_1^2Q^T_1(\psi_1)
    \geq a_0^2b+a_1^2 \geq  \min(1,b)>0.
  \end{aligned}
\end{equation}

Since $\lambda_1^T$ depends on $T$ continuously, by combing \cref{e5.9} and \cref{e5.11}, there exists $T_*>0$ such that
\begin{equation}\label{e5.12}
\lambda_1^T=0~~\mbox{ at }~~T_*.
\end{equation}
Moreover, at $T_*$, we have $a_0=0$ and $a_1=1$ since $Q_0(\psi_0)\geq b>0$. That is, $\psi^{T_*}$ must be in the form
\begin{equation}\label{psi_T_8}
\psi^{T_*}(y,s)=\psi_1 (y) \cos \left(2\pi s\right).
\end{equation}

Note that $\psi_1>0$ or $\psi_1<0$ in $B$. Otherwise, if $\psi_1(1)\geq 0$, let
\begin{equation*}
\psi(y,s)=\max(\psi_1(y),0)\cos(2\pi s).
\end{equation*}
Then
\begin{equation*}
Q^{T_*}(\psi)=Q_1^{T_*}(\psi)<Q_1^{T_*}(\psi_1)=Q^{T_*}(\psi^{T_*}),
\end{equation*}
which is a contradiction. If $\psi_1(1)<0$, by considering
\begin{equation*}
\psi(y,s)=\min(\psi_1(y),0)\cos(2\pi s),
\end{equation*}
we have a contradiction in a similar way. Hence, $\psi$ has a constant sign in $B$. Without loss of generality, we assume that $\psi_1>0$ in $B$.
\end{proof}

\section{Bifurcation argument and proof of the main result}
\label{sec:proof_Tmain}

In this section we conclude the proof of our main result by means of the classical Crandall-Rabinowitz Theorem \cite{MR2859263}.

\begin{theorem} \label{CR1} \textbf{\mbox{(Crandall-Rabinowitz Bifurcation Theorem)}}
	Let $X$ and $Y$ be Banach spaces, and let $U\subset X$ and $I\subset\mathbb{R}$ be open sets, where we assume $0\in U$. Denote the elements of $U$ by $v$ and the elements of $I$ by $T$. Let $G: U \times I \rightarrow Y$ be a $C^{2}$ operator such that
	\begin{itemize}
		\item[(i)] $G(0,T)=0$ for all $T \in I,$
		\item[(ii)] $\ker D_{v}G(0,T_*)=\mathbb{R}\,w$ for some $T_* \in I$ and some $w\in X\setminus\{0\};$
		\item[(iii)] $\mathrm{codim}\, \mathrm{Im} D_{v}G(0, T_*)=1;$
		\item[(iv)] $D_{T}D_{v}G(0, T_*)(w)\notin \emph{Im~} D_{v}G(0, T_*).$
	\end{itemize}
	Then there exists a nontrivial $C^2$ curve
	\begin{equation}\label{eq.B_Cap}
		(-\varepsilon, \varepsilon) \ni \eta \mapsto (v_{\eta}, T_{\eta}) \in X \times I,
	\end{equation}
for some $\varepsilon>0$, such that:
\begin{enumerate}
	\item $v_{0}=0$, $v'_{0}=w$, $T_{0}=T_*$, $T'_{0}=0$,.
	\item $G\left(v_{\eta}, T_{\eta}\right)=0$ for all $ \eta \in(-\varepsilon,\varepsilon)$.
\end{enumerate}
	Moreover, there exists a neighborhood $\mathcal{N}$ of $(0, T_*)$ in $X \times I$ such that all solutions of the equation $G(v, T)=0$ in $\mathcal{N}$ belong to the trivial solution line $\{(0, T)\}$ or to the curve \eqref{eq.B_Cap}. The intersection $(0, T_*)$ is called a bifurcation point.
\end{theorem}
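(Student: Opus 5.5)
The plan is the classical one: desingularize $G$ along the kernel direction and then apply the Implicit Function Theorem. Throughout, $L:=D_vG(0,T_*)$ and $M:=D_TD_vG(0,T_*)$.

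\emph{Setting up the blown-up map.} By Hahn--Banach I choose $\ell\in X^*$ with $\ell(w)=1$ and set $Z=\ker\ell$, so that $X=\mathbb{R}w\oplus Z$. I look for solutions of the form $v=\eta(w+z)$ with $z\in Z$, and define
\begin{equation*}
F(\eta,z,T):=\int_0^1 D_vG\bigl(t\eta(w+z),T\bigr)(w+z)\,dt .
\end{equation*}
By (i), $F(\eta,z,T)=\eta^{-1}G(\eta(w+z),T)$ for $\eta\neq0$, and $F(0,z,T)=D_vG(0,T)(w+z)$. Since $G$ is $C^2$, $F$ is $C^1$ near $(0,0,T_*)$, and $F(0,0,T_*)=Lw=0$ by (ii).

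\emph{Invertibility of the partial derivative.} I compute $D_{(z,T)}F(0,0,T_*)(\zeta,\tau)=L\zeta+\tau Mw$ and show it is an isomorphism $Z\times\mathbb{R}\to Y$.
\begin{itemize}
\item Injectivity: if $L\zeta+\tau Mw=0$, then (iv) forces $\tau=0$. Hence $\zeta\in\ker L\cap Z=\mathbb{R}w\cap\ker\ell=\{0\}$.
\item Surjectivity: by (iii), $\mathrm{Im}\,L$ has codimension one. The range of a bounded operator with finite codimension is closed, and $Mw\notin\mathrm{Im}\,L$, so $\mathrm{Im}\,L\oplus\mathbb{R}Mw=Y$.
\end{itemize}
The open mapping theorem then gives a bounded inverse. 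The Implicit Function Theorem produces a $C^1$ map $\eta\mapsto(z(\eta),T(\eta))$ with $z(0)=0$, $T(0)=T_*$ and $F(\eta,z(\eta),T(\eta))=0$. Setting $v_\eta=\eta(w+z(\eta))$ and $T_\eta=T(\eta)$, we get $G(v_\eta,T_\eta)=0$, $v_0=0$, $v_0'=w$ and $T_0=T_*$, and $v_\eta\neq0$ for $\eta\neq0$.

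\emph{Regularity caveats.} With $G$ only $C^2$, this argument yields a $C^1$ curve. The $C^2$ regularity of the curve, and the identity $T_0'=0$, require more. For $C^2$ I would assume $G\in C^3$, which holds here since $G$ is $C^\infty$ by Proposition \ref{Pr30}. I would then differentiate $F(\eta,z(\eta),T(\eta))=0$ once at $\eta=0$ and project onto a complement of $\mathrm{Im}\,L$. This gives $T'(0)=-\tfrac12\,\pi\bigl(D_{vv}G(0,T_*)[w,w]\bigr)/\pi(Mw)$, where $\pi$ is the projection onto that complement. So $T'_0=0$ holds exactly when the quadratic term lies in $\mathrm{Im}\,L$, which I would check separately in our setting, for instance via a symmetry such as $s\mapsto s+\tfrac12$, which sends $w=\cos(2\pi s)$ to $-w$.

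\emph{Local uniqueness, the main obstacle.} I would expect this step to be the hardest, since one must show that any nearby nontrivial solution is captured by the scaling $v=\eta(w+z)$ with $z$ small. Let $P$ be a projection of $Y$ onto $\mathrm{Im}\,L$. I first solve $PG(\eta w+z,T)=0$ for $z=\tilde z(\eta,T)\in Z$ by the Implicit Function Theorem; this works because $PL|_Z$ is invertible. By (i), uniqueness gives $\tilde z(0,T)=0$, so $\|\tilde z(\eta,T)\|\le C|\eta|$. Now let $G(v,T)=0$ with $(v,T)$ close to $(0,T_*)$, and put $\eta=\ell(v)$. Then $v=\eta w+\tilde z(\eta,T)$. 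If $\eta=0$, then $v=0$ and the solution is trivial. Otherwise $v=\eta(w+z)$ with $z=\tilde z/\eta$ small, uniformly as the neighborhood shrinks, since $\partial_\eta\tilde z(0,T)$ is close to $\partial_\eta\tilde z(0,T_*)=0$. Consequently $F(\eta,z,T)=0$, and uniqueness in the Implicit Function Theorem for $F$ places $(v,T)$ on the curve \eqref{eq.B_Cap}.
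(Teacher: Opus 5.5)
Your argument is correct. The paper gives no proof of this statement; it quotes it from Kielh\"{o}fer's book \cite{MR2859263}, so the relevant comparison is with the standard proof there.

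You follow the original Crandall--Rabinowitz route. You blow up along $w$ in the whole space, $F(\eta,z,T)=\eta^{-1}G(\eta(w+z),T)$, apply the Implicit Function Theorem on $Z\times\mathbb{R}$, and use a Lyapunov--Schmidt splitting only afterwards, for local uniqueness. The standard proof in the cited book reverses the order. It first solves $PG(sw+z,T)=0$ for $z=\psi(sw,T)$, which reduces everything to a scalar equation $\Phi(s,T)=0$ with $\Phi(0,T)=0$. It then factors $\Phi=s\tilde\Phi$ and applies the scalar Implicit Function Theorem to $\tilde\Phi(s,T)=0$; the condition $\partial_T\tilde\Phi(0,T_*)\neq0$ is equivalent to (iv). In that route local uniqueness is automatic, because every solution near $(0,T_*)$ already has the form $(sw+\psi(sw,T),T)$ with $s\tilde\Phi(s,T)=0$. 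In your route it costs an extra observation: $\|\tilde z(\eta,T)\|/|\eta|$ is small near $(0,T_*)$ because $\partial_\eta\tilde z(0,T_*)=0$. You supply this correctly. In exchange, you obtain the curve directly in $X$ without building the reduction map first.

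Your regularity caveats are not gaps in your proof but genuine defects of the statement as printed.
\begin{itemize}
\item With $G$ only $C^2$, the curve is in general only $C^1$. On $X=Y=\mathbb{R}$, the map $G(v,T)=(T-T_*)v-|v|^{5/2}$ is $C^2$ and satisfies (i)--(iv) with $w=1$. Its nontrivial branch $T=T_*+\mathrm{sgn}(v)|v|^{3/2}$ admits no $C^2$ parametrization with $v_0'=1$.
\item The claim $T_0'=0$ is false in general. For $G(v,T)=(T-T_*)v-v^2$ the branch is $T=T_*+v$, so $T_0'=1$, exactly as your formula for $T'(0)$ predicts.
\end{itemize}

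Neither defect affects the paper. $G$ is $C^\infty$ by Proposition \ref{Pr30}, and Theorem \ref{Th_main_B_Cap} uses only a continuous curve with $T_0=T_*$. Your half-period shift argument does give $T_0'=0$ in the paper's setting. The equivariance $G(v(\cdot+\tfrac12),T)=G(v,T)(\cdot+\tfrac12)$ together with $w(\cdot+\tfrac12)=-w$ forces $D_{vv}G(0,T_*)[w,w]$ to be $\tfrac12$-periodic. Hence it is orthogonal to $\cos(2\pi s)$, and so it lies in $\mathrm{Im}\,H_{T_*}$.
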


We are now in a position to prove our main result.

\begin{proof}[Proof of \Cref{Th_main_B_Cap}]
Let
\begin{equation*}
X=Y=C_{e,m}^{2, \alpha}(\mathbb{R}/ \mathbb{Z}), \quad I=\mathbb{R}_+, \quad
U=\left\{w \in C_{e,m}^{2, \alpha}(\mathbb{R}/ \mathbb{Z}): \|w\|_{L^{\infty}(\mathbb{R}/ \mathbb{Z})}<1 \right\}.
\end{equation*}
We first note that by \Cref{Pr30}, $G: U\times I \to Y$ is a well-defined smooth map and for any $T \in I$,
\begin{equation*}
\begin{aligned}
G(0, T)
= \phi_{0,T}|_{\partial C^1_{1}}-\frac{1}{2}\int_{\partial C^{T}_{1}} \phi_{0,T}
= \Phi |_{\partial C^1_{1}}-\frac{1}{2}\int_{\partial C^{1}_{1}} \Phi
= 0.
\end{aligned}
\end{equation*}
Thus, condition (i) is satisfied.

\medskip

By \Cref{Pr40}, the kernel of the linearized operator $D_{v}G(0,T_{*})$ is one-dimensional and is spanned by the function $w(s)=\cos (2\pi s)$, i.e.,
\begin{equation*}
  \ker D_{v}G(0,T_{*})=\mathbb{R}\, w.
\end{equation*}
Since $D_{v}G(0,T_{*})$ is a Fredholm operator of index zero, it immediately follows that
\begin{equation*}
\mathrm{codim}\, \mathrm{Im}(D_{v}G(0,T_{*}))=1.
\end{equation*}
This verifies conditions (ii) and (iii).

\medskip

Finally, we verify the transversality condition, namely $D_{T}D_{v}G(0,T_{*})(w) \notin \mathrm{Im}(D_{v}G(0,T_{*})).$ For any $\xi \in \mathrm{Im}(D_{v}G(0,T_{*})) = \mathrm{Im}(H_{T_{*}})$, there exists some $v$ such that $\xi=H_{T_{*}}(v)$. Consequently,
\begin{equation*}
  \int_0^1 \xi w=\int_0^1 H_{T_{*}}(v)w =\int_0^1
	H_{T_{*}}(w)v=0,
\end{equation*}
where we used the self-adjointness of $H_{T_*}$ and the fact that $H_{T_{*}}(w)=0$. Since $H_T$ is a linear essentially self-adjoint operator with closed range, its image can be characterized as
\begin{equation*}
  \mathrm{Im}(H_{T_{*}})=\Bigg\{\xi:\int_0^1\xi w=0\Bigg\}.
\end{equation*}
Recalling that
\begin{equation*}
D_{T}D_{v}G(0,T_{*})(w) =\frac{d}{dT}\left(H_{T}(w)\right) \Big|_{T=T_{*}}.
\end{equation*}
Then, by \eqref{H_T}, we get
\begin{equation*}
  \frac{d}{dT}\left(H_{T}(w)\right) \Big|_{T=T_{*}}
  =\frac{d}{dT}\left(\hat \psi |_{\partial C_1^1} +\kappa w\right) \Big|_{T=T_{*}}
  =\frac{d}{dT} (\hat \psi|_{\partial C_1^1})  \Big|_{T=T_{*}}
  :=\dot{\hat{\psi}} |_{\partial C_1^1}.
\end{equation*}
Since $\hat \psi$ satisfies \eqref{psi_hat_2}, it follows that $\dot{\hat{\psi}}$ satisfies the following problem:
\begin{equation}\label{dot_psi_hat}
\begin{cases}
  \mathrm{div} (A\cdot \nabla \dot{\hat{\psi}})
  - b \dot{\hat{\psi}}
  =2T_*^{-3}\left(1+\Phi_y^2\right)^{-\frac{1}{2}} \hat{\psi}_{ss}(T_*)
  =:f(y,s) &~~\mbox{in}~~ C^1_{1},\\
  \dot{\hat{\psi}}_{\nu}=0 &~~\mbox{on}~~\partial C^1_{1},
\end{cases}
\end{equation}
To proceed, it suffices to show that
\begin{equation}\label{Targ}
  \int_0^1 \dot{\hat{\psi}}|_{\partial C_1^1} \cdot w\neq 0.
\end{equation}

In view of \eqref{psi_T_8}, we have
\begin{equation*}
  \hat \psi_{ss}(T_*)=-4\pi^2 \psi_1(y)\cos(2\pi s).
\end{equation*}
Consequently, the source term can be expressed as
\begin{equation*}
  f(y,s)=2T_*^{-3}\left(1+\Phi_y^2\right)^{-\frac{1}{2}} \hat{\psi}_{ss}
  =-8\pi^2 T_*^{-3}\left(1+\Phi_y^2\right)^{-\frac{1}{2}}
  \psi_1(y)\cos(2\pi s)
  =:g(y)\cos(2\pi s),
\end{equation*}
which is an even function with respect to $y$.

Furthermore, let us consider the associated one-dimensional problem:
\begin{equation}\label{tilde_psi}
\begin{cases}
  \left(A^{11} \tilde{\psi}_y\right)_y
  - \left(4\pi^2 A^{22}+b\right) \tilde{\psi}
  =g &~~\mbox{in}~~B,\\
  \tilde{\psi}_y=0 &~~\mbox{at}~~-1,1.
\end{cases}
\end{equation}
Note that $A^{11} > 0$ and $4\pi^2 A^{22}+b > 0$. By the Lax-Milgram theorem in $H^1(-1,1)$, the non-homogeneous Neumann boundary value problem \eqref{tilde_psi} possesses a unique even solution $\tilde{\psi}$ for any given sufficiently regular function $g$. We refer the reader to standard texts such as \cite[Chapter 8]{MR2759829} or \cite{MR2789179} for detailed theoretical treatments. By setting
\begin{equation*}
  \dot{\hat{\psi}}(y,s)=\tilde{\psi}(y) \cos(2\pi s),
\end{equation*}
one can easily verify that this separable form provides the unique solution to \eqref{dot_psi_hat}. Therefore,
\begin{equation*}
  \dot{\hat{\psi}}|_{\partial C_1^1}=\tilde{\psi}(1) w.
\end{equation*}

Next, in order to get \eqref{Targ},we only need to prove
\begin{equation*}
  \tilde{\psi}(1) \neq 0.
\end{equation*}
Indeed, since
\begin{equation*}
   \left(A^{11} \tilde{\psi}_y\right)_y
  - \left(4\pi^2 A^{22}+b\right) \tilde{\psi}
  =g<0
\end{equation*}
and $4\pi^2 A^{22}+b>0$, by the strong maximum principle \cite[Theorem 3.5]{MR1814364}, $\tilde{\psi}$ cannot achieve a non-positive minimum in $B$. Hence, if $\tilde{\psi}(1)=0$, we have that $\tilde{\psi}>0$ in $B$. Then, by Hopf's Lemma \cite[Lemma 3.4]{MR1814364}, $\tilde{\psi}_y(1) < 0$. This blatantly contradicts our homogeneous Neumann boundary condition $\tilde{\psi}_y(1) = 0$. Therefore, $\tilde{\psi}(1)\neq 0$.

From the above arguments, all the assumptions of \Cref{CR1} are satisfied. Hence, we obtain a $C^2$ curve $(v_{\eta},T_{\eta})$ (see \eqref{eq.B_Cap}), which satisfies (1) and (2) in \Cref{CR1}. For any $\eta \in(-\varepsilon,\varepsilon)$, there exists a unique solution $\phi_{\eta}$ of \eqref{capil_4} corresponding to $v=v_{\eta}$ and $T=T_{\eta}$. By $G\left(v_{\eta}, T_{\eta}\right)=0$, we conclude that $\phi_{\eta}$ is a constant (denoted by $c_{\eta}$) on $\partial C^1_1$ (see the definition of $G$ in \eqref{G}). Equivalently, there exists a unique solution $u_{\eta}$ of \eqref{capil_Neumann} with $u_{\eta}$ being the constant $c_{\eta}$ on $\partial C_{1+v_{\eta}}^{T_{\eta}}$. That is, $u_{\eta}$ is the unique solution of \eqref{capil_result}. Hence, the proof is completed.
\end{proof}

\medskip

\printbibliography

\end{document}